\NeedsTeXFormat{LaTeX2e}

\documentclass{amsart}
\usepackage[english,german]{babel}
\usepackage{amsmath}
\usepackage{amsfonts}
\usepackage{amssymb}
\usepackage{amsthm}
\usepackage{amsopn}
\usepackage{amscd}
\usepackage{amsxtra}

\usepackage{mdwlist}
\usepackage{subfigure}
\usepackage{multicol}
\usepackage{pstricks}
\usepackage[utf8]{inputenc}
\usepackage{courier}
\usepackage{url}
\usepackage{xypic}
\usepackage[all,cmtip]{xy}

\usepackage[color,final]{showkeys}
\definecolor{labelkey}{rgb}{1,0,1}
\definecolor{refkey}{gray}{1}

\usepackage[vcentering,dvips]{geometry}

\newtheorem{theorem}{Theorem}[section]
\newtheorem{lemma}[theorem]{Lemma}
\newtheorem{proposition}[theorem]{Proposition}

\newtheorem{corollary}[theorem]{Corollary}

\theoremstyle{definition}

\newtheorem{conjecture}[theorem]{Conjecture}

\theoremstyle{remark}
\newtheorem{example}[theorem]{Example}
\newtheorem*{example*}{Example}

\numberwithin{equation}{section}

 \def\Q{\mathbb{Q}}

\def\P{\mathbb{P}}

\def\cA{\mathcal{A}}
 \def\cB{\mathcal{B}}
\def\cC{\mathcal{C}}
\def\cD{\mathcal{D}}

 \def\cK{\mathcal{K}}

\def\a{\alpha}

\def\d{\delta} 
\def\D{\Delta}

\def\l{\lambda} 
\def\m{\mu}

\def\M{\overline{M}}

\def\1tn{\{1, \ldots, n\}}

\def\cA{\mathcal{A}}

\newgray{gray1}{.65}
\newgray{gray2}{.75}
\newgray{gray3}{.85}

\title[Fulton's conjecture for $\overline{M}_{0,7}$]%
 {Fulton's conjecture for $\overline{M}_{0,7}$} %
 
\author{Paul Larsen}

\address{Humboldt-Universit\"at zu Berlin, Institut f\"ur Mathematik, 10099 Berlin, Germany}
\email{larsen@mathematik.hu-berlin.de}

\begin{document}
\selectlanguage{english}
\begin{abstract}
Fulton's conjecture for the moduli space of stable pointed rational curves, $\M_{0,n}$, claims that a divisor non-negatively intersecting all $F$-curves is linearly equivalent to an effective sum of boundary divisors. Our main result is a proof of Fulton's conjecture for $n=7$. A key ingredient in the proof is an $\binom{n}{4}$-dimensional subspace of the N\'eron-Severi space of $\M_{0,n}$, defined by averages of Keel relations, for which we prove Fulton's conjecture for all $n$.
\end{abstract}
\maketitle

 \section{Introduction}
 \label{secF:introduction}
 A central open problem concerning the birational geometry of the moduli space of stable pointed rational curves, $\M_{0,n}$, is the $F$-conjecture, which posits that the Mori cone of $\M_{0,n}$ is generated by a finite collection of rational curves called $F$-curves (defined below). This conjecture has been proven for $n\leq 7$ in \cite{km} using techniques from the minimal model program and negativity properties of the canonical bundle that do not hold for higher $n$.
 
It was realized in \cite{MR1887636} that the $F$-conjecture is implied
by another conjecture that can be stated in terms of convex geometry
of finite dimensional vector spaces. This conjecture, which we call
Fulton's conjecture for divisors, first appeared in \cite{km}, and was proven for $n \leq 6$ in
\cite{fa}, and, independently and by different methods, in
\cite{MR1938752}. The main result of this paper is a proof of Fulton's
conjecture for $n =7$. We begin by describing the usual formulation of the conjecture.

An element of $\M_{0,n}$ is a tree of projective lines with at least three \emph{special} points (i.e. marked points or nodes) on each component, modulo automorphism (see \cite{MR1034665} or \cite{MR702953}). There is a stratification of $\M_{0,n}$ by topological type as follows: a \emph{codimension 1-stratum} is an irreducible component of the locus of points of $\M_{0,n}$ having at least one node. Hence the generic element of a codimension 1-stratum has two irreducible components, with some $J \subseteq \1tn$, $2 \leq |J|\leq n-2$, giving the marked points on one component, and $J^c$ giving the marked points on the other. The resulting divisor is called a \emph{boundary divisor}, and is denoted by $\D_J$, $\D_{J^c}$, or $\D_{J, J^c}$. We continue increasing the number of nodes of the general element until reaching the \emph{dimension 1-strata}, which are irreducible components of loci with at least $n-4$ nodes. Any curve in $\M_{0,n}$ numerically equivalent to a dimension 1-stratum is called a \emph{Faber}- or \emph{F-curve}.

This stratification led Fulton to ask if the effective $k$-cycles of $\M_{0,n}$ were generated by the $k$-strata, as is the case with toric varieties. The question has a negative answer for divisors (see \cite{MR1882122}), but by restricting to divisors non-negatively intersecting all $F$-curves, called \emph{$F$-nef divisors}, we obtain

\begin{conjecture}[Fulton's conjecture]
\label{conj:Falggeom}
 Every $F$-nef divisor is numerically equivalent to an effective sum of boundary divisors.
\end{conjecture}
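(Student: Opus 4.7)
The strategy is to recast Fulton's conjecture as a problem in convex geometry: the Néron--Severi space $N^1(\M_{0,n})$ is spanned by the boundary classes $\D_J$ subject to Keel relations, so each $F$-nef divisor $D$ admits many representations $D = \sum a_J \D_J$, and we must show that at least one of these representations has all $a_J \geq 0$. Equivalently, the cone of $F$-nef divisors must be shown to be contained in the cone generated by boundary classes. Because the $F$-nef cone is cut out by the explicit intersection inequalities $D \cdot F \geq 0$ indexed by $F$-curves (which in turn are parametrized by 4-part partitions of $\{1,\ldots,n\}$), this is in principle a finite linear-algebraic verification, but for $n=7$ the combinatorial complexity is too large to be handled by direct case analysis, so a structural decomposition is needed.

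My plan is to first isolate a distinguished $\binom{n}{4}$-dimensional subspace $V \subset N^1(\M_{0,n})$ on which Fulton's conjecture can be proved for \emph{all} $n$. For each 4-subset $Q = \{i,j,k,l\}$ of $\1tn$, the three sums of boundary divisors appearing in the Keel relation at $Q$ are equal; averaging them yields a canonical representative $K_Q$, and I take $V = \mathrm{span}\{K_Q : Q \in \binom{\1tn}{4}\}$. The averaging symmetrizes the boundary contributions enough that the $F$-curve intersection numbers against elements of $V$ have a uniform combinatorial description; on $V$ one can then write down explicitly an effective boundary representative for any $F$-nef class, giving the ``base'' Fulton statement for all $n$.

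The $n=7$ case is then attacked by decomposing an arbitrary $F$-nef divisor $D$ into a component in $V$ and a component transverse to $V$, and showing that the transverse part can be absorbed into an effective boundary sum by successive applications of Keel relations. Concretely, I would organize boundary divisors by the partition type $\{|J|,|J^c|\}$ (types $\{2,5\}$ and $\{3,4\}$ for $n=7$), and perform a reduction algorithm: starting from some representation of $D$, repeatedly use a Keel relation to trade negative coefficients for non-negative ones, with the $F$-nef hypothesis providing the bookkeeping bound that prevents the procedure from getting stuck. The $S_7$-symmetry of the setup can be exploited to reduce the number of essentially distinct cases to a manageable number.

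The main obstacle I anticipate is in the transverse step: once $D - D_V$ has been extracted, it is not obvious that it can be written effectively without violating the non-negativity already obtained in $V$. The danger is that every available Keel move introduces a new negative coefficient elsewhere, creating a potentially non-terminating exchange. Controlling this will likely require a carefully chosen partial order on monomials $\D_J$ (perhaps by partition type, then lexicographically) and a potential function that strictly decreases under each reduction, together with a final finite case check at the minimal strata where no further reduction is possible. This minimal-stratum analysis is where the particular arithmetic of $n=7$, as opposed to $n \geq 8$, is expected to be decisive.
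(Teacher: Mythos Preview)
Your first step --- defining the Keel subspace via averaged Keel relations and proving Fulton's conjecture there for all $n$ --- is exactly the paper's Theorem~\ref{thm:FFKeel}; your $K_Q$ are the paper's $S_I$, and the ``uniform combinatorial description'' you anticipate is Lemma~\ref{KeelFaberIntersections}.

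The divergence is in the transverse step, and here there is a genuine gap. The paper does \emph{not} decompose $D$ as $D_V$ plus a remainder and then run a reduction on the remainder. Instead it extends $\{[S_I]\}$ to a full basis of $N^1(\M_{0,7})_\Q$ by adjoining seven explicit classes $[P_i] = [\alpha D_i + \lambda B_2 + \mu B_3]$, writes the obvious boundary representative of $D = \sum s_I S_I + \sum p_i P_i$, and determines for which $(\alpha,\lambda,\mu)$ the argument of Theorem~\ref{thm:FFKeel} extends. This succeeds on a codimension-one subspace (Proposition~\ref{prop:M07codim1}); outside it, a single coefficient $c_{ijk}$ can be negative, and the paper \emph{does} then use a Keel-relation swap to replace $-\D_{123}$ --- but the certificates that the new representative is effective are explicit non-negative combinations of $F$-curves found by the simplex algorithm, not by a descent procedure.

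Your proposed reduction algorithm is exactly where the difficulty concentrates, and you have not supplied the ingredients that would make it work. You correctly flag the danger that each Keel move repairs one negative coefficient while creating another; you then defer to an unspecified partial order and monovariant. The paper's evidence suggests this is optimistic: even after reducing by symmetry to a single negative $c_{123} = -1$, the effective representatives are not reached by any visible greedy move, and the $F$-curve combinations that certify them (dozens of curves with coefficients like $\tfrac{221}{1350}$, filling the Appendix) were located by linear programming, not structural descent. Until you can name the order and prove the potential strictly decreases --- precisely the point at which the paper abandons structure for computation --- your outline covers the easy half of the theorem and leaves the hard half as a hope.
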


To interpret Fulton's conjecture in terms of cones in
finite-dimensional vector spaces, let $V$ be the $2^{n-1} - n  -1$
dimensional vector space over $\Q$ with standard basis elements
labeled $\D_{J, J^c}$, where $J \subseteq \1tn$, and $|J|, |J^c| \geq
2$. Since the numerical equivalence classes of boundary divisors
generate the N\'eron-Severi space, $N^1(\M_{0,n})_{\Q}$, there is a
surjection $\phi: V \to N^1(\M_{0,n})_{\Q}$ with kernel $\mathcal{I}$. The
subspace $\mathcal{I}$ is generated by the Keel relations among
boundary divisors (see Equation (\ref{keelRelations})). Lastly, let
$\mathcal{F} \subseteq N^1(\M_{0,n})_{\Q}$ be the cone of $F$-nef divisor
classes. We can thus restate the conjecture as follows:

\begin{conjecture}[Fulton's conjecture, convex geometry formulation]
\label{conj:Fconvex}
For every $\alpha \in \mathcal{F}$, $\phi^{-1}(\alpha)$ intersects the first orthant $\langle \D_{J,J^c}\rangle_{\geq 0}$ nontrivially.
\end{conjecture}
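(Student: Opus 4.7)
The plan is to work directly with the convex geometry formulation. Given an $F$-nef class $\alpha \in \cF$, I start from an arbitrary lift $v \in \phi^{-1}(\alpha) \subseteq V$ and modify it by elements of the Keel subspace $\cI$ so as to drive every coefficient in the basis $\{\D_{J,J^c}\}$ to be non-negative. A first natural reduction is to exploit the $S_n$-action on $V$, which permutes basis vectors and preserves $\cI$, the first orthant, and $\cF$; this allows me to symmetrize any candidate lift over the stabilizer of $\alpha$ and, in particular, to assume full $S_n$-invariance whenever $\alpha$ is $S_n$-invariant.

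Next, following the abstract, I construct and exploit a $\binom{n}{4}$-dimensional subspace $\cK \subseteq N^1(\M_{0,n})_\Q$ built from $S_n$-averages of Keel relations indexed by $4$-element subsets of $\{1,\ldots,n\}$. On $\cK$ the defining relations decouple into essentially one piece per $4$-subset, and Fulton's conjecture for $F$-nef classes lying in $\cK$ can be settled for every $n$ by writing down an explicit non-negative representative; this is the general-$n$ theorem highlighted in the abstract and it will play the role of handling the symmetric part of an arbitrary class. The main step specific to $n=7$ is then to handle a general $\alpha \in \cF$ by decomposing a chosen lift into a component in (the preimage of) $\cK$ plus a correction, applying the subspace result to the first component, and using additional carefully chosen, un-averaged Keel relations to cancel the negative coefficients of the correction. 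With only $2^6 - 8 = 56$ boundary classes and the large symmetry group $S_7$ in play, a finite list of cases should suffice after symmetry reduction.

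The hard part will be controlling the correction. Cancelling a negative coefficient by adding a Keel relation typically introduces new negative coefficients elsewhere, so the bookkeeping must be arranged carefully and the decomposition $\alpha = (\text{part in }\cK) + (\text{correction})$ must be chosen so that the correction admits a non-negative lift via a tractable case analysis. I expect this to be where the particular features of $n=7$---the modest dimension of $\M_{0,7}$, the validity of the $F$-conjecture itself for $n \leq 7$ from \cite{km}, and the rich $S_7$-symmetry---enter decisively, perhaps by reducing the remaining verification to a finite linear-programming problem indexed by $S_7$-orbits of extremal rays of $\cF$.
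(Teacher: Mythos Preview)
Your outline captures the paper's overall strategy—use the Keel subspace $\cK_n$ (where the obvious representative of an $F$-nef class is already effective, Theorem~\ref{thm:FFKeel}), extend to a basis, and then repair negative coefficients outside $\cK_n$ using Keel relations—but as written it is a plan rather than a proof, and two of your suggested closing moves diverge from what actually works.

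First, note that the statement is recorded in the paper as a \emph{conjecture}; it is open for general $n$, and the paper establishes it only for $n=7$ (Theorem~\ref{thm:Fnef-M07}). So you are really proposing an argument for the $n=7$ case. Your final suggestion of reducing to ``a finite linear-programming problem indexed by $S_7$-orbits of extremal rays of $\cF$'' is not what the paper does and is likely infeasible: the extremal rays of the $F$-nef cone for $\M_{0,7}$ are numerous and not catalogued. The paper instead runs the simplex algorithm in the opposite direction—it treats each boundary coefficient $c_J$ as a linear functional on the $F$-nef cone and minimizes it subject to the $F$-inequalities, extracting an explicit non-negative combination of $F$-curves that certifies the desired lower bound. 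Also, citing ``the validity of the $F$-conjecture itself for $n\le 7$'' as an input is a red herring: Fulton's conjecture implies the $F$-conjecture, and the reverse implication is not known, so this cannot be used.

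Second, your ``$\cK$-part plus correction'' decomposition is left vague; the paper makes it concrete by extending the Keel classes to a basis with seven divisors $P_i=\alpha D_i+\lambda B_2+\mu B_3$ (Lemma~\ref{basisM07}), shows the obvious representative is effective on a codimension-one subspace for suitable $\alpha,\lambda,\mu$ (Proposition~\ref{prop:M07codim1}), and then handles the remaining direction by assuming one coefficient $c_{123}=-1$ is most negative and running a four-case analysis with explicit averages of Keel relations and simplex-certified bounds (recorded in the Appendix). None of this structure—especially the codimension-one phenomenon and the case split on which $c_J$ are sub-threshold—appears in your proposal, and it is where all the content lies.
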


By an inductive argument, Fulton's conjecture implies that the nef cone and the cone of $F$-nef divisors coincide (see \cite{morr}), or, by duality, the $F$-conjecture; it is not clear that implication holds in the other direction. Either conjecture, if true, would yield surprising consequences. It would follow that, on the level of curves, $\M_{0,n}$ behaves like a Fano variety, even though $\M_{0,n}$ is Fano only for $n \leq 5$. Moreover, by the Bridge Theorem of \cite{MR1887636}, the $F$-conjecture for $\M_{0, g+n}$ implies the analogous result for $\M_{g,n}$, yet for $g=22$ (\cite{f}) and $g \geq 24$ (\cite{MR664324} and \cite{MR910206}), $\M_{g}$ is of general type, i.e. in some sense as far as possible from being Fano. Recent work of Gibney, however, has enabled a computer-assisted proof of the $F$-conjecture for $\M_{g}$ for $g \leq 24$ (\cite{MR2551995}).

We now describe our proof, which uses techniques from both algebraic
and convex geometry. A main obstacle to proving Fulton's conjecture
is the lack of a canonical basis for $N^1(\M_{0,n})_{\Q}$. In Section
\ref{secF:FFforKeel}, we define an $\binom{n}{4}$-dimensional subspace of $N^1(\M_{0,n})_{\Q}$, called the \emph{Keel subspace}, that admits a canonical basis, meaning that every $F$-divisor class written in this basis has a natural representative as an effective sum of boundary divisors. This result holds for all $n$; see Theorem \ref{thm:FFKeel}. In the notation of Conjecture \ref{conj:Fconvex}, there is a section $\sigma$ of $\phi: V \to N^1(\M_{0,n})_{\Q}$ such that $\sigma(\mathcal{F})$ is contained in the positive orthant $\langle \D_{J, J^c} \rangle_+$. In particular, for $F$-nef classes in the Keel subspace, there is both an obvious choice of representative divisor and a recipe for combining $F$-inequalities to prove non-negativity of all boundary coefficients. Our approach for Fulton's conjecture is extend to a basis of
$N^1(\M_{0,n})_{\Q}$ so that this choice of divisor and recipe for
combining inequalities give every $F$-nef divisor in $\M_{0,n}$ as an
effective sum of boundary. For both $n=6$ and 7, this approach proves
the conjecture for a codimension one subspace of $N^1(\M_{0,n})_{\Q}$; see
Corollary \ref{cor:FFfor56} and Proposition \ref{prop:M07codim1}. We
complete the proof of Fulton's conjecture only for $n=7$ (Theorem
\ref{thm:Fnef-M07}), as the case $n=6$ has been proven elsewhere
(\cite{fa} and \cite{MR1938752}). For an $F$-nef divisor $D$ outside
of this subspace, some of the coefficients of boundary divisors can be
negative, so to finish the proof of Fulton's conjecture we exploit the
symmetry of Keel relations to find a different representative of $D$
as an effective sum of boundary divisors. We determine which inequalities establish non-negativity of this new representative via the simplex algorithm for $n=7$ (for $n=6$, these inequalities are not difficult to find by hand). We give these inequalities as intersections with explicit sums of $F$-curves. It is then straightforward to verify by hand that the new representative is an effective sum of boundary divisors. We hope that this approach will extend to $n > 7$, but for for higher $n$ it is likely impracticable to give inequalities as explicit sums of $F$-curves as is done in the Appendix.

To conclude this section, we collect notation and basic facts that are used throughout the paper. The first matter deserving comment is our choice of name for the conjecture. We follow \cite{MR1938752}, \cite{MR2483934}, and \cite{MR2551995} in using `Fulton's conjecture' to mean a modified version of Fulton's original question from \cite{km}. For standard definitions and notation from Mori theory, we refer to \cite{MR2095471}. We use without further mention that linear, rational, and numerical equivalence coincide for $\M_{0,n}$ (\cite{MR1034665}); for notational simplicity we refer mostly to numerical equivalence. Divisors are denoted by capital letters, e.g. $D \in
\mathrm{Div}_{\Q}(\M_{0,n})$, while a divisor's class in the
N\'eron-Severi space is denoted by brackets around a representative
divisor, e.g. $[D] \in N^1(\M_{0,n})_{\Q}$ (we make an exception for the hyperplane class $H \in N^1(\M_{0,n})_{\Q}$). By the `obvious representative' for a divisor class $\delta$ satisfying $\delta = [D]$, we mean the choice of representative $D$.

Central objects of study in this paper are the $F$-curves defined above. Every $F$-curve can be represented by a copy of $\M_{0,4}$ (called the \emph{spine} of the curve) with marked points $p_1, \ldots, p_4$, plus fixed pointed rational curves with $n_i + 1$ marked points, called \emph{tails}, glued to each of the marked points on the spine (see \cite{MR1887636}). If $n_i = 1$, after gluing we stabilize the corresponding tail, contracting it to the point $p_i$. The marked points on the tails of an $F$-curve give a partition of $\1tn$ into four subsets, $(\cA, \cB, \cC, \cD)$. This partition uniquely determines the numerical equivalence class of the $F$-curve (\cite{km}). We thus denote an $F$-curve $C$ as $C(\cA, \cB, \cC, \cD)$. To reduce clutter, subsets of $\1tn$ are written without brackets, and sets with more than one element are stacked vertically; for example, $C\big(1, 2, 3, \substack{4\\5}\big)$ is an $F$-curve in $\M_{0,5}$. The standard notation for boundary divisors is to label them as $\D_J$, where $J \subseteq \{1, \ldots, n\}$ records the marked points on the component of the generic element with fewer marked points. If $|J| = \frac{n}{2}$, we usually stipulate that $1 \in J$. With this labeling scheme, for each $j$ such that $2 \leq j \leq n/2$, we define the sum of boundary divisors
\begin{equation*}
B_j = \sum_{|J| = j} \D_J. 
\end{equation*}
We follow this labeling convention for boundary divisors except at the end of Section \ref{secF:FFforKeel}, where we use a different convention to facilitate calculations involving Kapranov's blow-up construction of $\M_{0,n}$.

\textbf{Acknowledgements:}
I would first like to thank my advisor Gavril Farkas for pointing me towards the nef cone of $\M_{0,n}$, for his support throughout this project, and for many explanations and suggestions. I am grateful to Frank Sottile for suggesting connections to linear programming and to G\"unter Ziegler for valuable clarifications regarding the simplex algorithm. I have also benefitted greatly from discussions with Anil Aryasomayajula, David Jensen, Kartik Venkatram, and Filippo Viviani. Finally, I would like to thank the anonymous referee for the detailed comments and suggestions for improvements.
\section{Fulton's conjecture for Keel classes}
\label{secF:FFforKeel}
The relations among boundary divisor classes in $N^{1}(\M_{0,n})_{\Q}$ are generated by the obvious equality of divisors $\D_J = \D_{J^c}$ plus the so-called Keel relations,
\begin{equation}\label{keelRelations}
\sum_{\substack{ \{i,j\} \subseteq T, \\ \{k,l\} \subseteq T^c}}[\D_T] =\sum_{\substack{ \{i,k\} \subseteq T, \\ \{j,l\} \subseteq T^c}}[\D_T]
= \sum_{\substack{ \{i,l\} \subseteq T, \\ \{j,k\} \subseteq T^c}}[\D_T],
\end{equation}
where $T \subseteq \{1, \ldots, n\}$ with $|T|, |T^c| \geq 2$ (\cite{MR1034665}). The main actors in this paper are divisors obtained by averaging the obvious representatives of Keel relations, that is, for each $n \geq 4$, we define the \emph{Keel divisor} $S_{I}$ as follows: let $I=\{i, j, k, l\} \subseteq \{1, \ldots, n\}$ and set
\begin{equation}\label{keelDivisor}
S_I = \frac{1}{3} \bigg(\sum_{\substack{ \{i,j\} \subseteq T, \\ \{k,l\} \subseteq T^c}}\D_T + \sum_{\substack{ \{i,k\} \subseteq T, \\ \{j,l\} \subseteq T^c}}\D_T
+ \sum_{\substack{ \{i,l\} \subseteq T, \\ \{j,k\} \subseteq T^c}}\D_T \bigg),
\end{equation}
where again, $T \subseteq \{1, \ldots, n\}$ such that $|T|, |T^c| \geq 2$. The class of a Keel divisor is denoted $[S_I]$, and is called a \emph{Keel class}. Note that these are defined as $\mathbb{Q}$-divisors, but on the level of numerical equivalence, we could take any of the three summands as our definition. The particular choice of $S_I$ in Equation (\ref{keelDivisor}) is important for finding a representative in a given numerical equivalence class, all of whose boundary divisor coefficients are non-negative.

\begin{lemma}
The $\binom{n}{4}$ Keel classes $[S_I]$ are linearly independent in $N^{1}(\M_{0,n})_{\Q}$.
\end{lemma}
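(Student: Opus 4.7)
My plan is to reinterpret $[S_I]$ as a pullback and then pair against an adapted family of $F$-curves. As a class in $N^{1}(\M_{0,n})_{\Q}$,
\[
[S_I]\;=\;\pi_I^{*}H,
\]
where $\pi_I\colon\M_{0,n}\to\M_{0,I}\cong\M_{0,4}$ is the forgetful morphism and $H$ is the hyperplane class on $\M_{0,4}\cong\P^{1}$. The point is that each of the three summands in (\ref{keelDivisor}) is the pullback under $\pi_I$ of one of the three boundary points of $\M_{0,4}$, and those three points are linearly equivalent on $\P^{1}$.

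To detect a relation among the $[S_I]$, I would pair with the following $F$-curves. For each $3$-subset $T=\{t_1,t_2,t_3\}\subseteq\1tn$, set
\[
C_T \;:=\; C\big(\{t_1\},\{t_2\},\{t_3\},\1tn\setminus T\big).
\]
The projection formula gives $[S_J]\cdot C_T = H\cdot(\pi_J)_{*}C_T$, and the forgetful map $\pi_J$ contracts an $F$-curve exactly when some part of its partition is disjoint from $J$. Here the three singleton parts are met by $J$ iff $T\subseteq J$, while the fourth is always met since $|J|=4>3=|T|$. Therefore
\[
[S_J]\cdot C_T \;=\; \begin{cases} 1 & \text{if } T\subseteq J, \\ 0 & \text{otherwise.} \end{cases}
\]

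If $\sum_{|I|=4} c_I[S_I]=0$, intersecting against each $C_T$ turns this into the system $\sum_{I\supseteq T} c_I=0$ for every $3$-subset $T\subseteq\1tn$. The lemma then reduces to showing that the $\binom{n}{3}\times\binom{n}{4}$ inclusion matrix $W_{3,4}$ has full column rank.

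The hard part will be this rank statement. For $n\leq 7$ one has $\binom{n}{3}\geq\binom{n}{4}$, and the Gram matrix $W_{3,4}^{\top}W_{3,4}$ — whose $(J,J')$-entry is $\binom{|J\cap J'|}{3}$ and which lies in the Bose-Mesner algebra of the Johnson scheme $J(n,4)$ — has strictly positive spectrum (by direct eigenvalue calculation, or equivalently Gottlieb's theorem on inclusion matrices). For $n\geq 8$, however, $\binom{n}{3}<\binom{n}{4}$ and the $C_T$ no longer suffice on their own; one must then augment the argument by intersecting against further $F$-curves — the next natural family being the partition type $(2,1,1,\ast)$ — which supply the additional linear conditions forcing all $c_I=0$.
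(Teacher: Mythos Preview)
Your approach differs from the paper's: the paper deduces linear independence from Kapranov's blow-up construction (Proposition~\ref{KapKeel} and Corollary~\ref{KeelIndep}), by computing an explicit $\binom{n}{4}\times\binom{n}{4}$ determinant built from the dual Kapranov basis. You instead pair against $F$-curves, which is more elementary and is the natural thing to try given Lemma~\ref{KeelFaberIntersections}.

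However, as written your argument is incomplete for $n\geq 8$. You correctly observe that the curves $C_T$ alone cannot suffice once $\binom{n}{3}<\binom{n}{4}$, and you propose augmenting with type-$(1,1,2,\ast)$ $F$-curves---but you stop short of verifying that these actually close the gap. That verification is the missing step, and it is in fact short: for any $4$-subset $I=\{a,b,c,d\}$ and any $n\geq 5$, one checks directly from Lemma~\ref{KeelFaberIntersections} that
\[
[S_J]\cdot\Bigl(C_{\{a,b,c\}}+C_{\{a,b,d\}}-C\bigl(a,\,b,\,\substack{c\\d},\,\1tn\setminus I\bigr)\Bigr)\;=\;2\,\delta_{I,J},
\]
so the intersection vectors of the $(1,1,1,\ast)$ and $(1,1,2,\ast)$ curves together already span all of $\Q^{\binom{n}{4}}$. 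With this identity in hand your route gives a uniform one-line proof for all $n\geq 5$ (and $n=4$ is trivial), bypassing Gottlieb, the Johnson scheme, and the paper's Kapranov determinant alike; without it, the case $n\geq 8$ remains an unjustified assertion.
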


The intuition behind this lemma is that any non-trivial linear relation among the Keel classes would give a relation in $N^1(\M_{0,n})_{\Q}$ in addition to the Keel relations, contradicting that the Keel relations generate all relations among boundary divisors. Rather than making this idea precise, we defer the proof to the discussion at the end of the present section, where it appears as Corollary \ref{KeelIndep}.

Keel classes exhibit very nice intersection properties with respect to $F$-curves. These intersections can be determined by standard calculations in $\M_{0,n}$ (see \cite{MR895568} or \cite{MR1631825}), or alternatively via intersection theory on $\P^1$. 
\begin{lemma}\label{KeelFaberIntersections}
Let $(\cA, \cB, \cC, \cD)$ be a partition of $\{1, \ldots, n\}$, and let $C$ be the corresponding $F$-curve. If $a_I = |\cA \cap I|$, and similarly for $b_I, c_I$ and $d_I$, then
\begin{displaymath}
[S_I] \cdot C = \left\{ \begin{array}{ll}
1 & \textrm{if $a_I = b_I = c_I = d_I = 1$},\\
0 & \textrm{otherwise.}
\end{array} \right.
\end{displaymath}
\end{lemma}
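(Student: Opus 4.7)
The plan is to identify the Keel class $[S_I]$ with the pullback of a point class under the forgetful map $\pi_I\colon \M_{0,n} \to \M_{0,4}$ that retains only the marked points in $I = \{i, j, k, l\}$, and then reduce the intersection to a degree computation via the projection formula. This is the "intersection theory on $\P^1$" route suggested earlier.

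First I would verify that $[S_I] = \pi_I^*[\mathrm{pt}]$. Standard formulae for the pullback of boundary divisors under forgetful maps give that each of the three inner sums in (\ref{keelDivisor}) is precisely $\pi_I^*$ of one of the three boundary points of $\M_{0,4}\cong\P^1$; since any two points on $\P^1$ are linearly equivalent, all three sums represent the class $\pi_I^*[\mathrm{pt}]$, and their average does as well. The factor $\frac{1}{3}$ in the definition of $S_I$ is then exactly what is needed so that the total class remains $\pi_I^*[\mathrm{pt}]$.

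By the projection formula, $[S_I]\cdot C = [\mathrm{pt}]\cdot (\pi_I)_*C$, which equals $\deg(\pi_I|_C)$ when this map is dominant and $0$ when it collapses $C$ to a point. Recalling that $C = C(\cA, \cB, \cC, \cD)$ is parametrized by varying the cross-ratio of four attachment points $p_\cA, p_\cB, p_\cC, p_\cD$ on a spine $\P^1$ with fixed pointed tails glued on, I would argue as follows. If each of $\cA, \cB, \cC, \cD$ meets $I$ in exactly one point, then after forgetting the non-$I$ marked points every tail retains a single marked point and thus becomes unstable (marked point $+$ node $=$ two special points); contracting all four tails transports the $I$-points to the corresponding spine points, so the stable model is a four-pointed $\P^1$ whose cross-ratio equals that of the F-curve. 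Hence $\pi_I|_C$ is an isomorphism and the intersection is $1$. If instead some part meets $I$ in at least two points, then by pigeonhole another part meets $I$ in zero points; the corresponding empty tail collapses to an ordinary point on the spine, the spine itself is then destabilized in that direction, and the stable model lies in a fixed boundary stratum of $\M_{0,4}$ that does not depend on the varying cross-ratio. Hence $\pi_I|_C$ is constant and the intersection is $0$.

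The main point requiring care is the "otherwise" branch: for each distribution type $(2,1,1,0)$, $(2,2,0,0)$, $(3,1,0,0)$, $(4,0,0,0)$ one must check that the stabilization of the forgotten family lies in a single point of $\M_{0,4}$, independent of the moving cross-ratio of the F-curve. A more pedestrian alternative would be direct case analysis using the intersection numbers $\D_T\cdot C \in \{0, \pm 1\}$ and summing over all $T$ appearing in the three sums of (\ref{keelDivisor}); this works, but the pullback approach makes transparent both why the answer is $0$ or $1$ and why the particular normalization by $\frac{1}{3}$ is the natural one.
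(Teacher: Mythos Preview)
Your proposal is correct and follows essentially the same route as the paper's proof: identify $[S_I]$ with $\pi_I^*[\mathrm{pt}]$, apply the projection formula, and observe that $(\pi_I)_*[C]$ equals $[\P^1]$ or $0$ according to whether each block of the partition meets $I$ exactly once. One small inaccuracy worth fixing: in the $(3,1,0,0)$ and $(4,0,0,0)$ distribution types the image of $C$ under $\pi_I$ is a fixed \emph{interior} point of $\M_{0,4}$ rather than a boundary point, but your conclusion (the map is constant, hence the intersection is $0$) is unaffected.
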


\begin{proof}
Let $\pi_I: \M_{0,n} \to \M_{0,4} \cong \P^1$ be the forgetful morphism that remembers only the marked points labeled by $I$. It is easily checked that, if $I=\{i,j,k,l\}$, 
\begin{equation}
S_{I} = \pi^*_I\bigg(\frac{1}{3}(\D_{ij} + \D_{ik} + \D_{il})\bigg). 
\end{equation}
Since all points of $\P^1$ are numerically equivalent, it follows that $[S_I] = \pi^*_I ([pt])$. By the push-pull formula, $[S_I] \cdot C = \pi^*_I ([pt]) \cdot C = ([pt])\cdot ((\pi_I)_*(C))$. Finally, $(\pi_I)_* ([C]) = [\M_{0,4}] =  [\P^1]$ precisely when $a_I = b_I = c_I = d_I = 1$, and is 0 otherwise.
\end{proof}

Let $n\geq 4$ and let $\cK_n$ be the $\binom{n}{4}$-dimensional subspace of $N^{1}(\M_{0,n})_{\Q}$ generated by the Keel classes. We call $\cK_n$ the \emph{Keel subspace} of $N^{1}(\M_{0,n})_{\Q}$.


\begin{lemma}\label{cJ}
The coefficient $c_J$ of $\D_J$ in $D = \sum s_I S_I = \sum c_J \D_J$ is
\begin{equation}
c_J = \frac{1}{3}\sum_{|I \cap J| = 2} s_I. \nonumber
\end{equation}
\end{lemma}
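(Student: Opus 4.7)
The plan is to expand each Keel divisor in the boundary basis, determine the contribution of $S_I$ to the coefficient of a fixed boundary divisor $\D_J$, and then sum over the index $I$. By linearity, this reduces to computing the coefficient of $\D_J$ in an individual $S_I$, for each four-element subset $I = \{i,j,k,l\} \subseteq \{1,\ldots,n\}$.

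Fix such $I$ and $J$, and set $m := |I \cap J|$. Since $\D_T = \D_{T^c}$, swapping $J$ with $J^c$ leaves both the coefficient $c_J$ and the condition $|I \cap J| = 2$ unchanged (the latter because $|I| = 4$, so $|I \cap J^c| = 4 - m$), so I may assume $m \leq 2$. Reading off the definition of $S_I$ in (\ref{keelDivisor}), each of the three inner sums runs over boundary divisors $\D_T$ for which a specified pair of $I$ lies in $T$ and the complementary pair lies in $T^c$. Hence $\D_J$ appears in $S_I$ exactly once for every partition of $I$ into two pairs that is compatible with the partition $(J, J^c)$, each occurrence carrying weight $1/3$.

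When $m \in \{0,1\}$ no compatible pairing exists: in the first case all of $I$ lies in $J^c$, while in the second case only one element of $I$ lies in $J$, so every pair in every pairing of $I$ has an element in $J^c$, preventing any full pair from landing in $J$. When $m = 2$, say $I \cap J = \{i,j\}$ and $I \cap J^c = \{k,l\}$, exactly one of the three pairings of $I$, namely $\{\{i,j\},\{k,l\}\}$, is compatible, contributing $1/3$ via the first summand of $S_I$ (with $T = J$), while the other two summands contribute nothing.

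Summing $s_I$ times this coefficient over all $I$ yields $c_J = \frac{1}{3}\sum_{|I \cap J| = 2} s_I$, as claimed. The argument is essentially bookkeeping; the only subtlety is the identification $\D_T = \D_{T^c}$, which the case analysis handles cleanly, so I do not anticipate any real obstacle.
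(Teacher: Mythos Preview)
Your proof is correct and follows essentially the same approach as the paper's: both simply read off from the definition of $S_I$ that $\D_J$ appears with coefficient $1/3$ precisely when $|I\cap J|=2$. The paper compresses this into a single sentence, while you spell out the case analysis and the $\D_T=\D_{T^c}$ symmetry explicitly.
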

\begin{proof}
By definition, each $\D_J$ with $|I \cap J|=2$ appears with multiplicity $1/3$ in $S_I$.
\end{proof}

\begin{example}\label{M05}
To illustrate notation, we consider the well-known example of $\M_{0,5}$. The Keel divisors here are 
\begin{align}
S_{1234} &= \frac{1}{3}\big((\D_{12} + \D_{34}) + (\D_{13} + \D_{24}) + (\D_{14} + \D_{23})\big), \nonumber \\
S_{1235} &= \frac{1}{3}\big((\D_{12} + \D_{35}) + (\D_{13} + \D_{25}) + (\D_{15} + \D_{23})\big), \nonumber \\
S_{1245} &= \frac{1}{3}\big((\D_{12} + \D_{45}) + (\D_{14} + \D_{25}) + (\D_{15} + \D_{24})\big), \nonumber \\
S_{1345} &= \frac{1}{3}\big((\D_{13} + \D_{45}) + (\D_{14} + \D_{35}) + (\D_{15} + \D_{34})\big), \textrm{ and }\nonumber \\
S_{2345} &= \frac{1}{3}\big((\D_{23} + \D_{45}) + (\D_{24} + \D_{35}) + (\D_{25} + \D_{34})\big). \nonumber
\end{align}
If $D = \sum s_I S_I$, the coefficient of $\D_{12}$ in $D$ is 
\begin{align}
c_{12} &= \frac{1}{3}(s_{1234} + s_{1235} + s_{1245}) = \frac{1}{3} \sum_{|I \cap \{1,2\}| = 2} s_I. \nonumber
\end{align}
\end{example}

\begin{theorem}[Fulton's conjecture for $\cK_n$]
\label{thm:FFKeel}
If $[D] \in \cK_n$ is an $F$-nef divisor class, then $D=\sum s_I S_I$ is an effective sum of boundary divisors.
\end{theorem}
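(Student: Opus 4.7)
The plan is to verify non-negativity of each boundary coefficient $c_J$ of $D = \sum_I s_I S_I$ by exhibiting, for every $J$ with $|J|, |J^c| \geq 2$, an explicit non-negative combination of $F$-curves whose total intersection with $[D]$ equals a positive multiple of $c_J$. By Lemma \ref{cJ} we have $c_J = \tfrac{1}{3}\sum_{|I\cap J| = 2} s_I$, so it suffices to isolate the sum $\sum_{|I\cap J| = 2} s_I$ as an intersection number of $[D]$ with an effective combination of $F$-curves.

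First I would choose, for each pair $(a,b) \in J \times J^c$, the $F$-curve
\[
C_{a,b} = C\bigl(\{a\},\, J\setminus\{a\},\, \{b\},\, J^c\setminus\{b\}\bigr),
\]
which is well-defined precisely because $|J|, |J^c| \geq 2$. Applying Lemma \ref{KeelFaberIntersections}, I would check that $[S_I] \cdot C_{a,b} = 1$ exactly when $I$ meets each of the four parts in a single element, i.e.\ when $a,b \in I$ and $|I \cap J| = 2$; otherwise the intersection is $0$. Thus
\[
[D] \cdot C_{a,b} = \sum_{\substack{a,b \in I \\ |I\cap J| = 2}} s_I.
\]

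Next I would sum over all $(a,b) \in J \times J^c$. Each $I$ with $|I\cap J| = 2$ is counted with multiplicity $|I\cap J|\cdot|I\cap J^c| = 4$ in the double sum, while all other $I$ contribute zero. Hence
\[
\sum_{a\in J,\; b\in J^c} [D] \cdot C_{a,b} \;=\; 4 \sum_{|I\cap J|=2} s_I \;=\; 12\, c_J.
\]
Since $[D]$ is $F$-nef, the left side is a sum of non-negative numbers, so $c_J \geq 0$, giving the desired effectivity of $D = \sum_J c_J \D_J$.

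I do not expect any real obstacle here: the content is entirely in the bookkeeping provided by Lemma \ref{KeelFaberIntersections}, which turns each Keel class into a pull-back from $\M_{0,4}$, together with the combinatorial identity $|I\cap J|\cdot|I\cap J^c| = 4$ for the relevant $I$. The slightly subtle point is only the choice of $F$-curves $C_{a,b}$: they are designed so that their four-part partitions refine the bipartition $(J,J^c)$, which is exactly what forces the intersection pattern to pick out those $I$ meeting $J$ in two elements. This is also the key conceptual reason Theorem \ref{thm:FFKeel} works in the Keel subspace but breaks down when one tries the same recipe for arbitrary classes in $N^1(\M_{0,n})_{\Q}$.
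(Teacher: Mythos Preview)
Your proof is correct and follows essentially the same approach as the paper: choose $F$-curves whose four-part partition refines the bipartition $(J,J^c)$, sum them, and use Lemma~\ref{KeelFaberIntersections} to obtain a positive multiple of $c_J$. The only difference is that the paper carries this out for \emph{every} partition type $\mathbf{t}=(n_1,n_2,n_3,n_4)$ compatible with $J$ (which is later exploited for $n=6,7$, where combining several types is useful), whereas you pick the single type $(1,\,|J|-1,\,1,\,|J^c|-1)$; your choice always exists and makes the counting argument shorter, so for the theorem as stated your version is a clean streamlining of the paper's proof.
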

\begin{proof}
We show that for each $J \subseteq \{1, \ldots, n\}$ with $2 \leq |J| \leq n/2$, and each four-tuple ${\bf t}= (n_1, n_2, n_3, n_4) \in \mathbb{N}^4$ with $n_1 + n_2 + n_3 + n_4 = n$, there exists a collection of $F$-curves, $F^{\bf t}_J$, and a positive integer $m_{\bf t}$, such that
\begin{equation}
D \cdot \sum_{C \in F^{\bf t}_{J}} C = m_{\bf t} c_J\nonumber.
\end{equation}
To define the collection $F^{\bf t}_J$, note that the four-tuple ${\bf t}= (n_1, n_2, n_3, n_4)$ determines a type of $F$-curve, where each $n_i$ gives the cardinality of the $i^{th}$ element of a partition determining an $F$-curve.  Up to a possible reordering of partitions, define
\begin{equation}\label{eq:FJ}
  F^{\bf t}_{J} = \{C(\cA, \cB, \cC, \cD): C\textrm{ of type }{\bf t}, \, \cA \cup \cB = J\}. \nonumber
 \end{equation}
We claim that $F^{\bf t}_J$ is the set of $F$-curves $C$ of type ${\bf t}$ such that $C \cdot S_I = 0$ unless $|I \cap J| = 2$. In other words, $F^{\bf t}_J$ consists of all $F$-curves of type ${\bf t}$ whose intersection with $D$ gives inequalities involving only the $s_I$ that appear in $c_J$ as in Lemma \ref{cJ}.

To see this, suppose first that $C$ is of type ${\bf t}$, and that there exists an $S_I$ such that $C \cdot S_I = 1$ with $|I \cap J| \neq 2$. Using the notation of Lemma \ref{KeelFaberIntersections}, $a_I = b_I = c_I = d_I = 1$, so $|I \cap J| \neq 2$ implies that either there are elements of $J^{c}$ in $\cA$ or $\cB$ (for $|I \cap J| < 2$), or there are elements of $J$ in $\cC$ or $\cD$ (for $|I \cap J|> 2$), hence $C \notin F^{\bf t}_{J}$. 

Conversely, if $C$ is of type ${\bf t}$ and there are elements of $J$ in more than two of the partitions, then we can find a four-tuple $I$ with $|I \cap J| > 2$ and $C \cdot S_I = 1$. Hence $J$ must be contained in two of the partitions, $\cA$ and $\cB$, for example. If there is an element of $J^c$ in $\cA \cup \cB$, then there is an $I$ with $|I \cap J| \leq 1$, but $C \cdot S_I = 1$, hence proving the claim.

To calculate $m_{\bf t}$, select a term $s_I$ in the sum $c_J$. We count how many $F$-curves $C \in F^{\bf t}_{J}$ intersect $S_I$ with value 1. Set $\{i,j\} = I \cap J$ (since $s_I$ appears in $c_J$, we know $|I \cap J|=2$), and $\cA \cup \cB=J$. If $|\cA|=|\cB|$, there is no loss of generality in assuming $i \in \cA$ and $j \in \cB$. Then the remaining elements of the partitions $\cA$ and $\cB$ can be chosen in $\binom{|J|-2}{|\cA|-1}$ ways. If $|\cA| \neq |\cB|$, we must also consider $i \in \cB$ and $j \in \cA$, giving in total $2\binom{|J|-2}{|\cA|-1}$ ways to choose $\cA$ and $\cB$. Similarly, there are $\binom{n - |J|-2}{|\cC|-1}$ ways to choose $\cC$ and $\cD$ if $|\cC| = |\cD|$, and $2\binom{n - |J|-2}{|\cC|-1}$ ways if $|\cC| \neq |\cD|$.

Hence for each $s_I$ in $c_J$, precisely $m'=(2 - \d_{|\cA|, |\cB|})(2 - \d_{|\cC|, |\cD|})\binom{|J|-2}{|\cA|-1} \binom{n - |J|-2}{|\cC|-1}$ curves $C \in F^{\bf t}_{J}$ intersect $S_I$ with value 1. Therefore 
\begin{equation}
D \cdot \sum_{C \in F^{\bf t}_{J}} C = m' \sum_{|I \cap J| = 2} s_I = 3\, m' c_J, \nonumber
\end{equation}
so we see $m_{\bf t}=3\,m'$. Since $D$ is an $F$-nef divisor, and $m_{\bf t} \geq 0$, it follows that $c_J \geq 0$.
\end{proof}

\begin{example*}[1, continued]
There is only one type of $F$-curve in $\M_{0,5}$, so to prove that
$c_{12} \geq 0$, we consider
\begin{equation}
F^{(1:1:1:2)}_{12} = \big{\{}C\big(1 , 2, 3, \substack{4\\5}\big), C\big(1, 2, 4, \substack{3\\5}\big), C\big(1 ,2, 5, \substack{3\\4}\big) \big{\}}. \nonumber
\end{equation}
For $D = \sum s_I S_I$, the corresponding $F$-inequalities are
\begin{align}
D \cdot C(1, 2, 3, \substack{4\\5}) &= s_{1234} + s_{1235} \geq 0,  \nonumber \\
D \cdot C(1, 2, 4, \substack{3\\5}) &= s_{1234} + s_{1245} \geq 0,  \nonumber \\
D \cdot C(1, 2, 5, \substack{3\\4}) &= s_{1235} + s_{1245} \geq 0. \nonumber
\end{align}
Hence if $[D]$ is an $F$-nef divisor,
\begin{equation}
D \cdot \sum_{C \in F^{(1:1:1:2)}_{12}} C = 2\, (s_{1234} + s_{1235} + s_{1245}) = 6\, c_{12} \geq 0. \nonumber
\end{equation}
\end{example*}

\begin{corollary}
\label{cor:FFfor56}
Fulton's conjecture is true for $\M_{0,5}$ and a codimension one-subspace of $N^1(\M_{0,6})$.
\end{corollary}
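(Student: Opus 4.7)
The plan is to deduce the corollary from Theorem~\ref{thm:FFKeel} by a purely dimensional argument: once one checks that the Keel subspace $\cK_n$ equals (resp.\ is codimension one in) $N^1(\M_{0,n})_\Q$ for $n=5$ (resp.\ $n=6$), Fulton's conjecture on that subspace is exactly the content of Theorem~\ref{thm:FFKeel}.

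First I would record the dimensions. By the paper's own count, $\dim N^1(\M_{0,n})_\Q = 2^{n-1} - \binom{n}{2} - 1$, giving $5$ for $n=5$ and $16$ for $n=6$. On the other side, the linear independence lemma for Keel classes (whose proof is deferred to Corollary~\ref{KeelIndep}) gives $\dim \cK_n = \binom{n}{4}$, which equals $5$ for $n=5$ and $15$ for $n=6$.

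For $n=5$, the equality $\dim \cK_5 = 5 = \dim N^1(\M_{0,5})_\Q$ forces $\cK_5 = N^1(\M_{0,5})_\Q$, so every $F$-nef class lies in $\cK_5$ and Theorem~\ref{thm:FFKeel} produces the desired effective boundary representative. For $n=6$, the computation $\dim \cK_6 = 15 = \dim N^1(\M_{0,6})_\Q - 1$ exhibits $\cK_6$ as a codimension-one subspace, on which Theorem~\ref{thm:FFKeel} directly establishes Fulton's conjecture.

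There is no substantive obstacle here: the content is entirely absorbed into Theorem~\ref{thm:FFKeel} and into the linear independence statement. The only thing one must be attentive to is that, in the $n=5$ case, one really does need the dimension formula for $N^1(\M_{0,5})_\Q$ (not just a basis of boundary divisors) to conclude $\cK_5$ exhausts the whole space. With that in hand the corollary is a one-line application.
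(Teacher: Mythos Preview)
Your proposal is correct and follows essentially the same approach as the paper: a dimension count showing $\dim \cK_n = \binom{n}{4}$ versus $\dim N^1(\M_{0,n})_\Q = 2^{n-1} - \binom{n}{2} - 1$, which for $n=5,6$ gives equality and codimension one respectively, then invoking Theorem~\ref{thm:FFKeel}. The paper's own proof is exactly this one-line dimension comparison.
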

\begin{proof}
The Keel subspace has dimension $\binom{n}{4}$, while $\dim N^1(\M_{0,n}) = 2^{n-1} -
\binom{n}{2} - 1$.
\end{proof}
In particular, Theorem \ref{thm:FFKeel} gives less information as $n$
increases. In Section \ref{secF:FFfor7}, however, we complete the proof of
Fulton's conjecture for $n=7$. 

To conclude this section, we relate the Keel subspace to Kapranov's construction of $\M_{0,n}$ as an iterated blow-up of $\P^{n-3}$ along linear centers (see \cite{MR1237834}, although the order of blow-ups described here is from \cite{MR1957831}). For $n\geq6$, the last two stages of blow-ups in the Kapranov construction produce $\binom{n-1}{n-5}+\binom{n-1}{n-4} = \binom{n}{4}$ exceptional divisors, so it is natural to ask how the Keel subspace relates to the subspace of $N^1(\M_{0,n})_{\Q}$ spanned by exceptional divisor classes from these last two stages of blow-ups.

Let $\mathrm{E}(j)$ be the set of exceptional divisor classes from the $j^{th}$ stage of the Kapranov construction, let $H$ be the pull-back of the hyperplane class from $\P^{n-3}$, and let $\mathrm{Exc}(k)$ be the subspace of $N^{1}(\M_{0,n})_{\Q}$ generated by $H$ and $\mathrm{E}(j)$ for $j \leq k$.

\begin{proposition}\label{KapKeel}
Let $\pi_{\mathrm{Exc}(n-6)}: N^{1}(\M_{0,n})_{\Q} \to N^{1}(\M_{0,n})_{\Q}/\mathrm{Exc}(n-6)$ be the projection map. Then $\pi_{\mathrm{Exc}(n-6)}$ restricted to the Keel subspace is an isomorphism.
\end{proposition}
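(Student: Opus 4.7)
The plan is to match dimensions and then prove surjectivity of the restriction, which combined with the obvious bound $\dim\cK_n\leq\binom{n}{4}$ will force the restriction to be an isomorphism (and incidentally yield Corollary \ref{KeelIndep}). The dimension calculation is immediate: the Kapranov basis of $N^{1}(\M_{0,n})_{\Q}$ consists of $H$ together with the exceptional divisors $E_S$ for $S\subseteq\{1,\ldots,n-1\}$ with $1\leq|S|\leq n-4$, so by Pascal's identity,
\begin{equation*}
\dim\bigl(N^{1}(\M_{0,n})_{\Q}/\mathrm{Exc}(n-6)\bigr)=\binom{n-1}{n-5}+\binom{n-1}{n-4}=\binom{n}{4}.
\end{equation*}

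For surjectivity I would work in the basis of the quotient given by the classes $[\D_K]$ with $K\subseteq\{1,\ldots,n-1\}$ and $|K|\in\{3,4\}$, obtained from the last-two-stage exceptional divisors via $E_S=\D_{S\cup\{n\}}$ and $[\D_J]=[\D_{J^c}]$; this basis is naturally indexed by 4-subsets $I\subseteq\{1,\ldots,n\}$ via $K(I):=I\setminus\{n\}$. Using Lemma \ref{cJ} to write $[S_I]=\tfrac{1}{3}\sum_{|T\cap I|=2}[\D_T]$ and taking representatives $T=K\subseteq\{1,\ldots,n-1\}$, the terms with $|K|\geq 5$ vanish modulo $\mathrm{Exc}(n-6)$, those with $|K|\in\{3,4\}$ are basis elements, and those with $|K|=2$ must be expanded via the Kapranov formula expressing the proper transform $[\D_K]$ as $H$ minus a sum of exceptional classes, of which only the last-two-stage contributions survive reduction. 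Collecting contributions then yields an explicit formula for $[S_I]\bmod\mathrm{Exc}(n-6)$ in the chosen basis.

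The main step is then to verify that the resulting transition matrix $M$ is nonsingular. Splitting rows and columns by $|K|$ and by whether $n\in I$ produces a block form $M=\begin{pmatrix}M_{AA}&M_{AB}\\M_{BA}&M_{BB}\end{pmatrix}$ in which I expect $M_{AA}=-\mathrm{Id}$; the remaining three blocks should have clean combinatorial descriptions, with nonzero entries only when the indexing sets are in suitable inclusion or near-inclusion relations. A Schur-complement calculation will then reduce invertibility of $M$ to that of $M_{BB}-M_{BA}M_{AA}^{-1}M_{AB}$, which one checks is diagonal with nonzero entries. The main obstacle is the careful bookkeeping needed to derive the explicit formula for $[S_I]$ in the quotient---particularly tracking the contributions from the $|K|=2$ proper-transform corrections---after which the block and Schur-complement calculation is essentially mechanical.
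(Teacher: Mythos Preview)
Your proposal is correct and follows essentially the same route as the paper. The paper packages the coordinate computation via the dual Kapranov basis, proving in Lemma~\ref{KapKeelInt} that $[\D_J]^{\vee}\cdot[S_I]=\min\{0,2-|I\cap J|\}$, which is exactly the output of the bookkeeping step you describe (expanding the $|K|=2$ terms using the formula for $[\D_{ij}]$ in the Kapranov basis); it then performs the same block decomposition and Schur-complement reduction you anticipate, finding the $(|J|=3,\;n\in I)$ block equal to $-\mathrm{Id}$ and the Schur complement equal to $2\cdot\mathrm{Id}$.
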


\begin{corollary}\label{KeelIndep} The Keel classes are linearly independent in $N^1(\M_{0,n})_{\Q}.$
\end{corollary}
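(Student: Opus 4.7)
The corollary is a direct dimension-count consequence of Proposition \ref{KapKeel}, so I would not expect any substantial new work beyond what is already used to prove that proposition. My plan is as follows.

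First, I would record that the Keel subspace $\cK_n$ is, by definition, the image of the map from $\Q^{\binom{n}{4}}$ that sends the $I$th basis vector to $[S_I]$; hence $\dim \cK_n \leq \binom{n}{4}$, with equality iff the $[S_I]$ are linearly independent. The goal is therefore reduced to showing $\dim \cK_n = \binom{n}{4}$.

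Second, I would compute the dimension of the target of $\pi_{\mathrm{Exc}(n-6)}$. Kapranov's construction presents $\M_{0,n}$ as an iterated blow-up of $\P^{n-3}$ along smooth linear centers, so $N^1(\M_{0,n})_{\Q}$ is freely generated by $H$ together with the exceptional classes $\mathrm{E}(j)$ from all stages $j = 1, \ldots, n-4$. The subspace $\mathrm{Exc}(n-6)$ is spanned by $H$ and by $\mathrm{E}(j)$ for $j \leq n-6$, so the quotient $N^1(\M_{0,n})_{\Q}/\mathrm{Exc}(n-6)$ has dimension equal to the number of exceptional divisors produced in the last two stages, which the excerpt records as $\binom{n-1}{n-5}+\binom{n-1}{n-4} = \binom{n}{4}$.

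Finally, I would apply Proposition \ref{KapKeel}: the restriction of $\pi_{\mathrm{Exc}(n-6)}$ to $\cK_n$ is an isomorphism onto this $\binom{n}{4}$-dimensional quotient, and so $\dim \cK_n = \binom{n}{4}$. Combining with the first step, the Keel classes $\{[S_I] : I \subseteq \{1,\ldots,n\},\ |I| = 4\}$ must be linearly independent in $N^1(\M_{0,n})_{\Q}$. There is really no main obstacle here, since the substantive geometric input, namely the explicit identification of Keel classes modulo $\mathrm{Exc}(n-6)$ with a spanning set of the quotient, is precisely the content of Proposition \ref{KapKeel}; the corollary is just reading off what that isomorphism says on generators.
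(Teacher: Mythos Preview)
Your argument is correct and matches the paper's own proof essentially line for line: both observe that the quotient $N^{1}(\M_{0,n})_{\Q}/\mathrm{Exc}(n-6)$ has dimension $\binom{n}{4}$ (being spanned by the classes in $\mathrm{E}(n-5)\cup\mathrm{E}(n-4)$), invoke Proposition~\ref{KapKeel} to conclude $\dim\cK_n=\binom{n}{4}$, and then note that this equals the number of Keel classes. Your write-up is slightly more explicit about the intermediate step $\dim\cK_n\le\binom{n}{4}$, but the content is identical.
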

\begin{proof}
$N^{1}(\M_{0,n})_{\Q}/\mathrm{Exc}(n-6)$ is isomorphic to the subspace of $N^1(\M_{0,n})_{\Q}$ generated by $\mathrm{E}(n-5)$ and $\mathrm{E}(n-4)$. Thus $\dim(\cK_n) = \binom{n}{4}$, which equals the number of Keel classes.
\end{proof}

\begin{corollary}\label{BasisExt} The hyperplane class plus the classes of divisors from the first $n-6$ stages of the Kapranov construction extend the Keel classes to a basis of $N^1(\M_{0,n})_{\Q}$ for $n\geq 6$.
\end{corollary}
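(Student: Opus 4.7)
The plan is to derive the corollary as a short consequence of Proposition \ref{KapKeel}, combined with the elementary linear-algebra fact that if $\pi \colon V \to V/W$ is the projection, then any basis of $W$ together with a lift of a basis of $V/W$ forms a basis of $V$.

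Concretely, I would take $V = N^1(\M_{0,n})_{\Q}$ and $W = \mathrm{Exc}(n-6)$. By Kapranov's construction, the full collection $\{H\} \cup \mathrm{E}(1) \cup \cdots \cup \mathrm{E}(n-4)$ is already a basis of $N^1(\M_{0,n})_{\Q}$; in particular its subset $\{H\} \cup \mathrm{E}(1) \cup \cdots \cup \mathrm{E}(n-6)$ is linearly independent, and spans $\mathrm{Exc}(n-6)$ by definition, so it is a basis of $W$. Meanwhile, Proposition \ref{KapKeel} tells us that the $\binom{n}{4}$ Keel classes map isomorphically under $\pi_{\mathrm{Exc}(n-6)}$ onto the quotient $V/W$, so they furnish the required lift of a basis of $V/W$. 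Invoking the linear-algebra fact then yields the claim.

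Beyond Proposition \ref{KapKeel}, the only check is a dimension bookkeeping step: one wants to confirm that the proposed basis has the right cardinality $\dim N^1(\M_{0,n})_{\Q} = 2^{n-1} - \binom{n}{2} - 1$. Using $|\mathrm{E}(j)| = \binom{n-1}{j}$, together with the identity $\binom{n-1}{n-5}+\binom{n-1}{n-4}=\binom{n}{4}$ recorded just before Proposition \ref{KapKeel}, the proposed basis has
\[
1 + \sum_{j=1}^{n-6}\binom{n-1}{j} + \binom{n}{4} \;=\; \sum_{j=0}^{n-4}\binom{n-1}{j}
\]
elements, which equals $2^{n-1} - 1 - \binom{n-1}{2} - (n-1) = 2^{n-1} - \binom{n}{2} - 1$ via $\binom{n-1}{2}+(n-1) = \binom{n}{2}$. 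Since all of the geometric content is absorbed into Proposition \ref{KapKeel}, no real obstacle remains; the corollary is essentially a translation of that proposition into the language of bases.
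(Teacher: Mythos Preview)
Your proposal is correct and follows exactly the route the paper intends: the corollary is stated without proof immediately after Proposition \ref{KapKeel} and Corollary \ref{KeelIndep}, and your argument simply makes explicit the elementary linear-algebra step (basis of $W$ plus lifts of a basis of $V/W$) that the paper leaves to the reader. The dimension count you include is a pleasant sanity check but is not logically required, since the linear-algebra fact already guarantees that the union is a basis once Proposition \ref{KapKeel} supplies the isomorphism onto the quotient.
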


The proof of Proposition \ref{KapKeel} follows by showing a particular
intersection matrix is full rank, where all intersections are
described in the next lemma. For the remainder of this section, we use
a different indexing scheme for boundary divisors occurring as
exceptional divisors in the Kapranov construction. Namely, we write
each such divisor uniquely as $\D_J$ with $3 \leq |J| \leq n-2$ and $n
\notin J$. For example, elements of $\mathrm{E}(n-5)$ are written as
$[\D_{J}]$ where $n \notin J$ and $|J|=4$, and likewise elements of
$\mathrm{E}(n-4)$ are $[\D_{J}]$, where $n \notin J$ and $|J|=3$. We
denote elements of the dual Kapranov basis by $[\D_{J}]^{\vee}$ and
$H^\vee$. For present purposes, we need only the intersections
$[\D_J]^{\vee} \cdot [S_I]$ for $3 \leq |J| \leq 4$, but with no
additional effort, we can calculate the intersections of Keel divisors
with all 1-cycles $[\D_J]^\vee$.

\begin{lemma}\label{KapKeelInt}
Let $n \notin J$ with $3 \leq |J| \leq n-2$. Then
$[\D_J]^{\vee} \cdot [S_I]  = \mathrm{min}\{0, 2-|I \cap J|\}$. \nonumber
\end{lemma}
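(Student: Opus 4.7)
The plan is to compute $[\D_J]^{\vee} \cdot [S_I]$ as the coefficient of $[\D_J]$ in the Kapranov basis expansion of $[S_I]$, since $[\D_J]^\vee$ is by construction dual to this basis.

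First I would use Lemma \ref{cJ} to write $[S_I]$ in terms of boundary divisors, choosing for each boundary divisor the unique representative $\D_K$ with $n \notin K$, so $K \subseteq \{1, \ldots, n-1\}$ with $2 \leq |K| \leq n-2$. The coefficient $c_K$ of $[\D_K]$ is $\frac{1}{3}$ if $|I \cap K| = 2$ and $0$ otherwise; the identity $|I \cap K| = 4 - |I \cap K^c|$ shows this does not depend on which of $K$ or $K^c$ we use as representative. I would then convert each term to the Kapranov basis: when $|K| \geq 3$, the class $[\D_K]$ is itself a basis element; when $|K| = 2$, the divisor $\D_K$ is the strict transform in $\M_{0,n}$ of the hyperplane of $\P^{n-3}$ spanned by the $n-3$ points indexed by $\{1, \ldots, n-1\} \setminus K$. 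Because all blow-up centers in Kapranov's construction are linear and the divisor is a hyperplane, every linear center contained in this hyperplane contributes multiplicity $1$ in the strict-transform formula, yielding
\begin{equation*}
[\D_K] = H - \sum_{\substack{K' \supsetneq K \\ 3 \leq |K'| \leq n-2, \, n \notin K'}} [\D_{K'}].
\end{equation*}

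Assembling the two expansions, the coefficient of $[\D_J]$ in $[S_I]$ equals $c_J - \sum_{K \subsetneq J,\, |K| = 2} c_K$. The first term contributes $\tfrac{1}{3}$ exactly when $|I \cap J| = 2$, and the $K$ in the second sum with $c_K \neq 0$ are precisely the $2$-element subsets of $I \cap J$, numbering $\binom{|I \cap J|}{2}$. Setting $m = |I \cap J|$, the coefficient is $\tfrac{1}{3}\bigl(\d_{m,2} - \binom{m}{2}\bigr)$, and a direct check of the five cases $m \in \{0,1,2,3,4\}$ confirms this matches $\min\{0, 2 - m\}$.

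The main technical input is the iterated blow-up formula for $[\D_K]$ with $|K| = 2$, which follows from the fact that in Kapranov's construction the strict transform of a hyperplane picks up $-1$ times each exceptional class whose center is contained in the hyperplane, with all multiplicities equal to $1$ thanks to the linearity of the centers. Once this formula is in hand, the remainder of the argument is a short combinatorial check.
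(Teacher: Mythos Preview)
Your proof is correct and follows essentially the same route as the paper's own argument: both use the expansion $[\D_K] = H - \sum_{K' \supsetneq K}[\D_{K'}]$ for $|K|=2$ (Equation~(\ref{Dij}) in the paper) to reduce the computation to $\tfrac{1}{3}\bigl(\delta_{m,2} - \binom{m}{2}\bigr)$ with $m = |I\cap J|$, and then check the five cases. The only difference is presentational---you phrase the computation as extracting the coefficient of $[\D_J]$ in the Kapranov basis expansion of $[S_I]$, while the paper phrases it as intersecting $[\D_J]^\vee$ term by term with the boundary summands of $S_I$; these are the same calculation.
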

\begin{proof}
We first consider how 1-cycles of the dual Kapranov basis for $N_1(\M_{0,n})_{\Q}$ intersect the $\D_{ij}$ with $n \notin\{i,j\}$. Note that these are the only boundary divisors not appearing as exceptional divisors in the Kapranov construction.  We write such boundary divisors in the Kapranov basis as
\begin{equation}\label{Dij}
[\D_{ij}] = H - \sum_{\stackrel{J \supseteq \{i,j\}, n \notin J,}{3 \leq |J| \leq n-2}}[\D_J]. 
\end{equation}
It follows that the intersections $[\D_J]^{\vee} \cdot \D_{ij}$ are given by $-1$ times the number of times $[\D_J]$ appears in the right hand side of Equation (\ref{Dij}), that is,
\begin{displaymath}
[\D_J]^{\vee} \cdot \D_{ij} = \left\{ \begin{array}{ll}
0 & \textrm{if $\{i,j\} \nsubseteq J$,}\\
-1 & \textrm{if }\{i,j\} \subseteq J.
\end{array} \right.
\end{displaymath}

Now it is easy to calculate the intersection $[\D_J]^{\vee} \cdot S_I$: it is $1/3$ times the number of times $\D_J$ appears in $S_I$ minus the number of times $\D_{ij}$ appears in $S_I$, where $\{i,j\} \subseteq J$, i.e.
\begin{align}
[\D_J]^{\vee} \cdot S_I &= \frac{1}{3} \bigg(
\left\{ \begin{array}{ll}
1 & \textrm{if $|I \cap J|=2$}\\
0 & \textrm{if }|I \cap J| \neq 2
\end{array} \right\}
-\big| \big{\{} \{i,j\}: \{i,j\} \subseteq I \cap J\big{\}}\big| \bigg) \nonumber\\
&=\frac{1}{3}\bigg(
\left\{ \begin{array}{ll}
1 & \textrm{if $|I \cap J|=2$}\\
0 & \textrm{if }|I \cap J| \neq 2
\end{array} \right\}
- \binom{|I \cap J|}{2}
  \bigg).
\end{align}
Since $0 \leq |I \cap J| \leq 4$, by enumerating the five possibilities, we obtain precisely the desired formula.
\end{proof}

\begin{proof}[Proof of Proposition \ref{KapKeel}]
We can write each Keel class as a linear combination of classes in the Kapranov basis by calculating the intersection matrix given by pairing the Keel classes and the dual Kapranov basis. The projection $\pi_{\mathrm{Exc}(n-6)}$ restricted to the Keel subspace is given by setting intersections with all but the 1-cycles dual to $\mathrm{E}(n-5)$ and $\mathrm{E}(n-4)$ to zero, i.e. by the the matrix pairing Keel classes and the 1-cycles dual to $\mathrm{E}(n-5)$ and $\mathrm{E}(n-4)$. The proposition will follow by showing that the determinant of this matrix is non-zero. 

Let $M$ be the intersection matrix of Keel classes with 1-cycles dual to $\mathrm{E}(n-5)$ and $\mathrm{E}(n-4)$. $M$ can be decomposed into the following blocks:
\begin{displaymath}
M =
\left( \begin{array}{cc}
A= ([\D_J]^{\vee} \cdot [S_I])_{\substack{n \notin I\cup J,  \\ |J|=4 }}
 & B=([\D_J]^{\vee} \cdot   [S_I])_{\substack{n \in I \setminus J\\ |J|=4}},\\
C=   ( [\D_J]^{\vee} \cdot [S_I] )_{\substack{n \notin I \cup J, \\ |J|=3}}
  & D= ( [\D_J]^{\vee}  \cdot [S_I])_{\substack{n \in I \setminus J, \\ |J|=3}}
\end{array} \right),
\end{displaymath}
 so the rows of this matrix are indexed by $k$-tuples $J$, $3 \leq k \leq 4$, while the columns are indexed by four-tuples $I$.
%
We claim that $\det(M) = 2^{\binom{n-1}{4}} \, (-1)^{\binom{n-1}{3}}$. In particular, $M$ is nonsingular.

By Lemma \ref{KapKeelInt}, $D=\mathrm{diag}(-1, \ldots, -1)$, and hence 
\begin{equation}
\det(M) = \det(A - BD^{-1}C) \det(D) = (-1)^{\binom{n-1}{3}} \det(A + BC).\nonumber
\end{equation}

The rows of $BC$ are labeled by indices $J$ with $n \notin J$, $|J|=4$, while the columns are indexed by sets $I$ with $n \notin I$. With this labeling scheme,
\begin{align}
(BC)_{J,I}&=
 \sum_{\substack{n \notin J' ,\\ |J'|=3}} (  [\D_J]^{\vee} \cdot [S_{J' \cup \{n\}}]) ([\D_{J'}]^{\vee} \cdot [S_{I}]). \nonumber
\end{align}
Since $J'$ of the sum has cardinality three, and since $n \notin J$, $|I \cap J'|$ and $|(J' \cup \{n\}) \cap J|$ are at most three, by Lemma \ref{KapKeelInt} both $ [\D_J]^{\vee} \cdot  [S_{J' \cup \{n\}}]$ and $[\D_{J'}]^{\vee}  \cdot [S_{I}] $ are greater than $-2$, and their product is non-zero precisely when both equal $-1$. Therefore
\begin{align}
(BC)_{J,I}&= |\{J': |J' \cap (I \cap J)| = 3\}| \nonumber\\
&= \left\{ \begin{array}{ll}
4 & \textrm{if $I=J$},\\
1 & \textrm{if $|I \cap J| =3$},\\
0 & \textrm{if $|I \cap J| \leq 2$},
\end{array} \right. \nonumber
\end{align}
while
\begin{align}
(A)_{J,I}&= \min\{0, 2 - |I \cap J|\} \nonumber \\
&=\left\{ \begin{array}{rl}
-2 & \textrm{if $I=J$},\\
-1 & \textrm{if $|I \cap J| =3$},\\
0 & \textrm{if $|I \cap J| \leq 2$}.
\end{array} \right. \nonumber
\end{align}

Hence $A+BC=diag(2, \ldots, 2)$, proving the claim and the proposition.
\end{proof}


  \section{Fulton's conjecture for $\overline{M}_{0,7}$}
  \label{secF:FFfor7}
The dimension of the Keel subspace $\mathcal{K}_7$ is 35, while  $\dim N^1(\M_{0,7})_{\Q}=42$, so we require an additional seven divisor classes to extend to a basis. For $i,j=1, \ldots, 7$, we define $D_i = \sum_{j \neq i} \D_{ij}$.
Our basis extension candidate is then
\begin{equation}
P_i = \a D_i + \l B_2 + \m B_3,
\end{equation}
where $i=1, \ldots, 7$.
We first look for $\a, \l$, and $\m$ that will extend the proof of  Fulton's conjecture for $\cK_7$ to all of $N^{1}(\M_{0,7})_{\Q}$. As with $\M_{0,6}$, this is possible for a codimension one subspace of $N^{1}(\M_{0,7})_{\Q}$, while outside of this subspace we use averages of Keel relations to find an effective representative.
\begin{lemma}\label{basisM07}
 The Keel classes plus the $[P_i]$, $i=1, \ldots, 7$, form a basis for $N^{1}(\M_{0,7})_{\Q}$ if and only if $\a \neq 0$ and $18 \a + 63 \l - 35 \m \neq 0$.
\end{lemma}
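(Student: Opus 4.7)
The plan is to work $S_7$-equivariantly, using the natural permutation action on markings, which preserves $\cK_7$ and hence the quotient $N^1(\M_{0,7})_\Q / \cK_7$. Since $\dim \cK_7 = \binom{7}{4} = 35$ and $\dim N^1(\M_{0,7})_\Q = 42$, the collection $\{[S_I]\} \cup \{[P_i]\}$ forms a basis iff the images of the $[P_i]$ span the seven-dimensional quotient $N^1/\cK_7$. I first identify this quotient as an $S_7$-representation. The Keel subspace $\cK_7$ is the permutation module on $\binom{7}{4}$ four-subsets, which by Young's rule decomposes into Specht modules as $[7] \oplus [6,1] \oplus [5,2] \oplus [4,3]$. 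For $N^1(\M_{0,7})_\Q = V/\mathcal{I}$, the permutation module $V$ on boundary divisors is $2[7] + 2[6,1] + 2[5,2] + [4,3]$ (contributed by the permutation modules on 2- and 3-subsets), and the 14-dimensional relation space $\mathcal{I}$ is a quotient of $\mathrm{Ind}_{S_4 \times S_3}^{S_7}([2,2] \boxtimes \mathbf{1})$, whose only irreducible summand also appearing in $V$ is $[5,2]$. Hence $\mathcal{I} \cong [5,2]$, giving $N^1 \cong 2[7] + 2[6,1] + [5,2] + [4,3]$ and $N^1/\cK_7 \cong [7] \oplus [6,1]$.

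Next, the span $W := \mathrm{span}_\Q\{[P_i]\}$ is $S_7$-stable and decomposes as $W^{S_7} \oplus W_{\mathrm{std}}$. Using $\sum_i D_i = 2 B_2$, we find $W^{S_7} = \Q \cdot \bigl((2\alpha + 7\lambda)[B_2] + 7\mu[B_3]\bigr)$ and $W_{\mathrm{std}} = \alpha \cdot \mathrm{span}\{[D_i] - [D_j]\}$. The image of $W$ in $N^1/\cK_7$ surjects iff both components do. For the invariant component, $(N^1)^{S_7} = \langle [B_2], [B_3]\rangle$ is two-dimensional (the classes have distinct $H$-coefficients in the Kapranov basis), while $(\cK_7)^{S_7}$ is generated by $\sum_I [S_I] = \frac{10}{3}[B_2] + 6[B_3]$, using Lemma \ref{cJ}. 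So $(N^1/\cK_7)^{S_7}$ is one-dimensional, and $\sum_i [P_i]$ surjects iff the vector $(2\alpha + 7\lambda,\ 7\mu)$ is not proportional to $(10/3,\ 6)$, equivalently iff $18\alpha + 63\lambda - 35\mu \neq 0$.

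For the standard component, $W_{\mathrm{std}}$ vanishes if $\alpha = 0$, so that case fails; assume $\alpha \neq 0$. Then $W_{\mathrm{std}}$ is an irreducible copy of $[6,1]$ inside $N^1$. Since $\cK_7$ contains one of the two copies of $[6,1]$ in $N^1$, we must verify that $W_{\mathrm{std}}$ is not that copy---this is the main obstacle. Setting $g_i := \sum_{I \ni i}[S_I]$, the $\cK_7$-copy of $[6,1]$ is generated by $g_i - g_j$; were $[D_1 - D_2] = c(g_1 - g_2)$ for some $c \in \Q$, the constant $c$ would be determined by pairing with any $F$-curve, using Lemma \ref{KeelFaberIntersections} on the right and Keel's intersection formula ($\D_J \cdot F(\cA,\cB,\cC,\cD)$ equals $+1$ if $J$ is a union of two parts, $-1$ if $J$ or $J^c$ is one of the $\cA_i$ with both sides of size $\geq 2$, and $0$ otherwise) on the left. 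A direct computation with $C_1 = C(1,\{2,3\},\{4,5\},\{6,7\})$ forces $c = 1/4$, whereas $C_2 = C(\{1,7\},2,3,\{4,5,6\})$ forces $c = 2/3$, a contradiction. Hence $W_{\mathrm{std}}$ is the other copy, injecting into the free $[6,1]$-slot of $N^1/\cK_7$. Therefore the map $W \to N^1/\cK_7$ is an isomorphism iff both $\alpha \neq 0$ and $18\alpha + 63\lambda - 35\mu \neq 0$, as claimed.
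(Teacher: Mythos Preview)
Your argument is correct, and it proceeds along a genuinely different route from the paper. The paper fixes the concrete Kapranov basis of Corollary~\ref{BasisExt}, writes each $[P_i]$ explicitly in that basis by solving a system of fifteen $F$-curve equations, assembles the resulting $7\times 7$ change-of-basis matrix, and computes its determinant directly. You instead exploit the $S_7$-action to reduce the problem to the two isotypic pieces of the seven-dimensional quotient $N^1/\cK_7 \cong [7]\oplus[6,1]$, and then test each piece with a single targeted calculation: the invariant piece via the coefficient identity $\sum_I S_I=\tfrac{10}{3}B_2+6B_3$, and the standard piece via two $F$-curve intersections showing $[D_1-D_2]\notin\cK_7$. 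Your approach is more structural and explains \emph{why} exactly two conditions appear (one per irreducible summand), while the paper's approach is more elementary and yields the explicit expressions for $[D_i]$, $[B_2]$, $[B_3]$ in the Kapranov basis that are reused later in the proof of Theorem~\ref{thm:Fnef-M07}. One small remark: your parenthetical justification that $[B_2],[B_3]$ are independent via ``distinct $H$-coefficients'' is unnecessary---it already follows from your computation that $\mathcal{I}\cong[5,2]$ contains no trivial summand, so $(N^1)^{S_7}\cong V^{S_7}$ is two-dimensional.
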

\begin{proof}
Corollary \ref{BasisExt} states that the Keel classes plus the hyperplane class and the boundary classes $[\D_{i7}]$, $i=1, \ldots, 6$, form a basis for $N^{1}(\M_{0,7})_{\Q}$, so to ensure that the $[P_i]$ complete the Keel classes to a basis, we calculate which $\alpha$, $\l$, and $\m$ guarantee that the projection map $\pi_{\cK_{7}}: N^1(\M_{0,7})_{\Q} \to N^1(\M_{0,7})_{\Q}/\cK_7$ restricted to the span of the $[P_i]$ give an isomorphism. Concretely, we write the $[P_i]$ in terms of the basis of Corollary \ref{BasisExt}, and check which values of $\a$, $\l$, and $\m$ correspond to a non-zero determinant for the base-change matrix. The class of $D_7$ is already written in terms of this basis; for $i=1, \ldots, 6$ we propose the following  form for $[D_i]$:
\begin{equation}\label{DiAnsatz}
D_i = a \sum_{i,7\in I} S_I + b \sum_{\substack{i \in I \\ 7 \notin I}} + c \sum_{\substack{i \notin I \\ 7 \in I }} S_I + d \sum_{i,7 \notin I} S_I + e\, \D_{i7} + f \sum_{\substack{j=1, \ldots, 6 \\ j \neq i}}\D_{j7} + h \, H.
\end{equation}
For concreteness, we determine $[D_1]$, the other $[D_i]$ being completely analogous. Intersections of all $F$-curves of $\M_{0,7}$ with the left and right sides of the proposed equality (\ref{DiAnsatz}) for $i=1$ are as follows, where, by a statement such as `1 on $(2)$,' we mean that the marked point 1 is on a tail containing two marked points:
\begin{align}
(*) \left\{
\begin{array}{rll}
2=& a + 3\,b, &\text{$C$ type $(1:1:1:4)$, with 1 on spine, 7 on tail},\\
2= &4\,a + e + f  +h, &\text{$C$ type $(1:1:1:4)$, with 1, 7 on spine}, \\
0= &a + 3\,c + 2\,f +h, &\text{$C$ type $(1:1:1:4)$, with 1 on tail, 7 on spine}, \\
0= &b+c + 2\,d, &\text{$C$ type $(1:1:1:4)$, with 1, 7 on tail}, \\
1=&3\, a + 3\,b - f, &\text{$C$ type $(1:1:2:3)$, with 1 on spine, 7 on $(2)$},\\
1=&2\, a + 4\,b, &\text{$C$ type $(1:1:2:3)$, with 1 on spine, 7 on $(3)$},\\
1=& 6\, a + e + h, &\text{$C$ type $(1:1:2:3)$, with 1, 7 on spine},\\
-1=& 3\,b + 3\,c - e,   &\text{$C$ type $(1:1:2:3)$, with 1, 7 on $(2)$},\\
0=& 2\,b + 2\,c + 2\,d,   &\text{$C$ type $(1:1:2:3)$, with 1, 7 on $(3)$},\\
-1=& 3\,a + 3\, c + f +h,   &\text{$C$ type $(1:1:2:3)$, with 1 on $(2)$, 7 on spine},\\
 0=& 2\,a + 4\, c + f +h,   &\text{$C$ type $(1:1:2:3)$, with 1 on $(3)$, 7 on spine},\\
0=& 4\,a + 4\, b - f, &\text{$C$ type $(1:2:2:2)$, with 1 on spine, 7 on tail},\\
-1=& 4\,a + 4\,c + h, &\text{$C$ type $(1:2:2:2)$, with 1 on tail, 7 on spine},\\
-1=& 4\,b + 4\,c - e, &\text{$C$ type $(1:2:2:2)$, with 1, 7 on same tail},\\
-1=& 2\,a + 2\,b + 2\,c
+ 2\,d - f, &\text{$C$ type $(1:2:2:2)$, with 1, 7 on distinct tails}.
\nonumber
\end{array} \right.
\end{align}
Gauss-Jordan elimination gives the (overdetermined but consistent) solution
\begin{equation*}
 [D_1] = -\frac{5}{2} \sum_{1,7 \in I} + \frac{3}{2}\Big(\sum_{\substack{1 \in I \\ 7 \notin I}} [S_I] - \sum_{\substack{1 \notin I \\7 \in I}} [S_I]\Big) + [\D_{17}] - 4\sum_{j=2}^6 [\D_{j7}] + 15 [H].
\end{equation*}

To write $[B_2]$ and $[B_3]$ in terms of Keel classes, the $[\D_{i7}]$, $1 \leq i \leq 6$, and $H$, we may consider a simpler expression than that for $D_i$, and we need only differentiate $F$-curves according to where the marked point $7$ is. Alternatively we may use the same expression and simply add two more columns to the right of the augmented matrix with the intersections of $[B_2]$ and $[B_3]$ with $C$, $C'$, and $C''$, respectively $(1:1:1:4)$, $(1:1:2:3)$, and $(1:2:2:2)$ $F$-curves:
\begin{align*}
&[B_2]\cdot C = 3, \; \, \, \, \quad[B_2]\cdot C' = 0, \quad [B_2]\cdot C'' = -3, \\
&[B_3]\cdot C = - 1, \quad [B_3]\cdot C' = 1,  \quad [B_3] \cdot C'' = 3.
\end{align*}
The result is
\begin{align*}
[B_2] &= 3 \sum_{7 \notin I} S_I -6 \sum_{7 \in I} S_I - 9 \sum_{1 \leq j \leq 6 } [\D_{j7}] + 45 H,\nonumber \\
[B_3] &= -\frac{3}{2} \sum_{7 \notin I} S_I +\frac{7}{2} \sum_{7 \in I} S_I + 5 \sum_{1 \leq j \leq 6 } [\D_{j7}] - 25 H.\nonumber
\end{align*}

Setting $x=-4\a - 9\l + 5\m$, the matrix given by the coordinates of $[\D_{j7}]$, $1 \leq j \leq 6$ and $H$ for the $[P_i]$ is
\begin{equation*}
P = \left(
\begin{array}{ccccc}
x+5\a & x & \ldots & x & -5(x + \a) \\
x & x+5\a &  & x & -5(x + \a)\\
\vdots &  & \ddots& & \vdots\\
x & x&  & x + 5\a & -5(x + \a) \\
x+ 5\a & x+5\a & \ldots & x+ 5\a & -5(x + 4\a)
\end{array} \right).
\end{equation*}
Row and column operations, followed by a Laplace expansion, show that the matrix has determinant $-5 (5\a)^6 (-5(x + 4\a) + 6(\frac{3}{5}(x+5\a)))$, which, after substituting for $x$, yields the desired result,
$\det(P) = -(5\a)^6(18 \a + 63 \l - 35 \m)$.
\end{proof}

Sufficient conditions for the determinant of Lemma \ref{basisM07} to vanish are easy to find. In particular, $\sum S_I = \frac{10}{3} \, B_2 + 6 \,B_3$, while $\sum P_i = (2 \, \a + 7\, \l) \, B_2 + 7 \, \m \, B_3$. These two sums are proportional if and only if $18 \a + 63 \l - 35 \m = 0$. Moreover, the choice $\a = 0$ obviously implies that the matrix is singular.

For $\a\,(18 \a + 63 \l - 35 \m) \neq 0$, every divisor class $[D]$ can be written uniquely as
\begin{equation}\label{DM07}
[D] = \sum s_I [S_I] + \sum_{i=1}^7 p_i [P_i].
\end{equation}
Taking the obvious representative, the coefficients of $\D_{12}$ and $\D_{123}$ in $D$ are
\begin{align*}
c_{12} & = \frac{1}{3} \sum_{I \supseteq \{1,2\}} s_I  +(\a + \l)(p_1 + p_2) + \l (p_3 + \ldots + p_7),\\
c_{123} & = \frac{1}{3} \sum_{|I \cap \{1,2,3\}| = 2} s_I + \m (p_1 + \ldots + p_7).
\end{align*}


The cone of $F$-nef divisors is determined by the following three sets of inequalities, corresponding to partitions $(\cA, \cB, \cC, \cD)$ of types $(1:1:1:4), (1:1:2:3),$ and $(1:2:2:2)$, respectively:

\begin{align}
\label{F4ineq07} \sum_{r=1}^4 s_{I_r}+ \, &(2\a+ 3\l - \m)\sum_{i \in \cA \cup \cB \cup \cC} p_i + (3\l - \m) \sum_{j \in \cD}p_j  \geq 0,\\
\label{F3ineq07} \sum_{r=1}^6 s_{I'_r} + \, & (\a + \m)\sum_{i \in \cA \cup \cB} p_i + (-\a + \m) \sum_{j \in \cC} p_j + \m \sum_{k \in \cD} p_k  \geq 0,\\
\label{F2ineq07}\sum_{r=1}^8 s_{I''_r} + \, &3(- \l + \m) \sum_{i \in \cA} p_{i} + (-\a + 3(- \l + \m)) \sum_{j \in \cB \cup \cC \cup \cD} p_j    \geq 0,
\end{align}
where all index sets are distinct within a given inequality, and, as in the notation of Lemma \ref{KeelFaberIntersections}, satisfy $a_{I_r} = b_{I_r} = c_{I_r} = d_{I_r} = 1$ for all $r$ (respectively $a_{I'_r} = \ldots = d_{I'_r} =1$, $a_{I''_r} = \ldots = d_{I''_r} = 1$). For example, the inequality of type (\ref{F3ineq07}) for the curve $C(1, 2, \substack{3\\4}, \substack{5\\6\\7})$ is
\begin{align*}
s_{1235}& + s_{1236} + s_{1237} + s_{1245} + s_{1246} + s_{1247} \\
+(\a&  + \m)(p_1 + p_2) + (-\a + \m)(p_3 + p_4) + \m(p_5+p_6+p_7) \geq 0. 
\end{align*}

\begin{theorem}[ (Fulton's conjecture for $\M_{0,7}$]
\label{thm:Fnef-M07}
Let $[D]$ be an $F$-nef divisor class in $\M_{0,7}$. Then $D$ is numerically equivalent to an effective sum of boundary divisors.
\end{theorem}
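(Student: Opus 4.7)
The plan is to use the basis of $N^1(\M_{0,7})_{\Q}$ supplied by Lemma \ref{basisM07} --- the Keel classes together with $[P_1], \ldots, [P_7]$ for some $(\a, \l, \m)$ with $\a(18\a + 63\l - 35\m) \neq 0$ --- and to write a generic $F$-nef class uniquely in the form (\ref{DM07}). For the obvious representative $D = \sum s_I S_I + \sum p_i P_i$, each boundary coefficient $c_J$ is an explicit linear form in the $s_I$'s and $p_i$'s (cf.\ the formulas given above for $c_{12}$ and $c_{123}$). Since the $S_7$-action permutes both the basis and the three families of $F$-inequalities (\ref{F4ineq07})--(\ref{F2ineq07}), it suffices to establish $c_J \geq 0$ for one orbit representative in each of the cases $|J|=2$ and $|J|=3$.

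The first step, in the spirit of Theorem \ref{thm:FFKeel}, is to exhibit for each orbit representative a non-negative rational combination of $F$-inequalities whose left-hand side is a positive multiple of $c_J$. The $s_I$-part of $c_J$ is already handled by a collection $F^{\bf t}_J$ as in Theorem \ref{thm:FFKeel}; what is new is the $p_i$-part, whose contribution depends on $(\a, \l, \m)$. Matching $p_i$-coefficients yields a linear system in $\a, \l, \m$, and I expect to find specific rational values of these parameters making the system solvable \emph{except} on a codimension-one subspace $\cH \subset N^1(\M_{0,7})_{\Q}$. This packages as Proposition \ref{prop:M07codim1} and settles the theorem for $[D] \notin \cH$.

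The main obstacle is the residual hyperplane $\cH$, where the obvious representative is no longer effective. Here I would exploit the fact that $D$ is determined by $[D]$ only modulo the Keel ideal $\mathcal{I}$: shifting $D$ by a carefully chosen element of $\mathcal{I}$ produces a new candidate representative whose coefficients are again linear in $s_I, p_i$. Unlike in step two, no symmetric pattern dictates the shift, so I would set up a linear program whose variables are the multipliers on the $F$-inequalities and whose constraints force the new boundary coefficients to be expressible as those multipliers, then invoke the simplex algorithm to produce explicit certificates. The output consists of concrete combinations of $F$-curves (recorded in the Appendix); once these are in hand, effectivity of the shifted representative reduces to a direct but routine hand verification.

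Overall, the proof is a two-layer reduction: strip $[D]$ down to the codimension-one failure of Proposition \ref{prop:M07codim1}, then resolve that residue by a Keel-kernel shift whose existence and shape are guaranteed by the simplex algorithm. The genuine difficulty is concentrated in this final LP step, because the absence of an $S_7$-symmetric combination of $F$-curves along $\cH$ puts the required inequalities out of reach of inspection.
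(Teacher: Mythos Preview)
You have inverted the roles of the ``easy'' and ``hard'' loci, and this inversion is not cosmetic --- it is precisely where the proof's difficulty lives. Proposition \ref{prop:M07codim1} does \emph{not} settle the theorem for $[D]$ outside a hyperplane; it settles it for $[D]$ \emph{inside} a codimension-one subspace. The reason is that when you try to match the $p_i$-contributions to $c_{123}$ using the collections $F^{(1:1:2:3)}_{123}$ and $F^{(1:2:2:2)}_{123}$, the conditions on $(\a,\l,\m)$ that make the combination come out to a positive multiple of $c_{123}$ force $18\a+63\l-35\m=0$. By Lemma \ref{basisM07} this is exactly the degeneracy locus where the $[S_I]$ together with the $[P_i]$ fail to span $N^1(\M_{0,7})_{\Q}$; they span only a hyperplane, and Proposition \ref{prop:M07codim1} is the statement that the obvious representative is effective \emph{there}. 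For a generic $[D]$ --- i.e.\ outside that hyperplane --- no choice of $(\a,\l,\m)$ simultaneously gives a basis and makes the obvious representative effective, and that is the case requiring the Keel-relation shift and the simplex certificates.

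Consequently your second step is aimed at the wrong target. The Keel-kernel shift and the LP are not a cleanup on a residual hyperplane; they are the main argument for the generic $[D]$. Concretely, the paper fixes a genuine basis (e.g.\ $\a=3,\l=5,\m=9$), notes that $c_{12}\geq 0$ holds unconditionally, and then assumes (after symmetry and scaling) that $c_{123}=-1$ is the most negative three-part coefficient. One then replaces $-\D_{123}$ by an averaged Keel expression, and the simplex algorithm is used to minimise each affected coefficient of the new representative subject to the $F$-inequalities \emph{together with} the assumptions $c_{123}=-1$ and $c_{ijk}\geq -1$; several distinct averaged Keel replacements are needed according to which auxiliary inequalities fail. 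Your outline of ``shift by an element of $\mathcal{I}$ and certify via LP'' is correct in spirit, but as written it is applied to the wrong stratum and omits the case analysis (the averaged expression for $-[\D_{123}]$ must be chosen differently depending on which other $c_J$ are small), without which the certificates in the Appendix cannot be organised.
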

\begin{proof}
By symmetry, the theorem is proven if we show that the coefficients $c_{12}$ and $c_{123}$ of the obvious representative $D$ from Equation (\ref{DM07}) are non-negative. The proof of Theorem \ref{thm:FFKeel} gives inequalities for $c_{12}$ from $(1:1:1:4)$ and $(1:1:2:3)$ $F$-curves. We will combine these inequalities to conclude $c_{12} \geq 0$ without placing further restrictions on the parameters $\a$, $\l$, and $\m$.

We consider first 
\begin{align}
F^{(1:1:1:4)}_{12} = \big{\{}&C(1, 2, 3, \substack{4\\5\\6\\7}), C(1, 2, 4, \substack{3\\5\\6\\7}),C(1, 2, 5, \substack{3\\4\\6\\7}), C(1, 2, 6, \substack{3\\4\\5\\7}), C(1, 2, 7, \substack{3\\4\\5\\6}) \big{\}}.\nonumber
\end{align}
By the proof of Theorem \ref{thm:FFKeel} and Equation (\ref{F4ineq07}), we obtain
\begin{equation}
D \cdot \sum_{C \in F^{(1:1:1:4)}_{12}}C
=2 \sum_{I \supseteq \{1,2\}}s_I + (10\a + 15\l - 5\m)(p_1 + p_2) +(2\a + 15\l - 5\m) (p_3 + \ldots + p_7). \nonumber
\end{equation}
Substituting $6 c_{12}$ yields
\begin{align*}
6 c_{12} + (4\a + 9 \l - 5\m)(p_1 + p_2) 
+ (2\a + 9\l - 5\m)(p_3 + \ldots + p_7) \geq 0.
\end{align*}

Next, for
\begin{align}
F^{(1:1:2:3)}_{12} = \{ &C(1, 2, \substack{3\\4}, \substack{5\\6\\7}), C(1, 2, \substack{3\\5},\substack{4\\6\\7}), C(1, 2, \substack{3\\6}, \substack{4\\5\\7}), C(1, 2, \substack{3\\7}, \substack{4\\5\\6}), C(1, 2, \substack{4\\5}, \substack{3\\6\\7}), \nonumber \\
& C(1,2,\substack{4\\6}, \substack{3\\5\\7}), C(1,2,\substack{4\\7},\substack{3\\5\\6}),
C(1,2,\substack{5\\6}, \substack{3\\4\\7}), C(1,2,\substack{5\\7}, \substack{3\\4\\6}), C(1,2,\substack{6\\7}, \substack{3\\4\\5})\}, \nonumber
\end{align}
we obtain as above
\begin{align*}
18c_{12} +  (-8\a -18 \l+ 10\m)(p_1+p_2) 
+(-4\a - 18\l +10\m) (p_3 + \ldots + p_7)\geq 0. \nonumber
\end{align*}
Combining, we see that for any allowed values of $\a$, $\l$, and $\m$,
\begin{equation}
D \cdot \bigg(\frac{1}{15}\sum_{C \in F^{(1:1:1:4)}_{12}} C + \frac{1}{30}\sum_{C' \in F^{(1:1:2:3)}_{12}}C'\bigg) = c_{12} \geq 0. \nonumber
\end{equation}

Next we consider
\begin{align*}
F^{(1:1:2:3)}_{123} = \big{\{}&C( 4, 1, \substack{2\\3}, \substack{5\\6\\7}), 
C( 5, 1, \substack{2\\3}, \substack{4\\6\\7}),
C( 6, 1, \substack{2\\3}, \substack{4\\5\\7}),
C( 7, 1, \substack{2\\3}, \substack{4\\5\\6}), 
C( 4, 2, \substack{1\\3}, \substack{5\\6\\7}), 
C( 5, 2, \substack{1\\3}, \substack{4\\6\\7}),\\
& 
C( 6, 2, \substack{1\\3}, \substack{4\\5\\7}),
C( 7, 2, \substack{1\\3}, \substack{4\\5\\6}), 
C( 4, 3, \substack{1\\2}, \substack{5\\6\\7}),   
C( 5, 3, \substack{1\\2}, \substack{4\\6\\7}),
C( 6, 3, \substack{1\\2}, \substack{4\\5\\7}),   
C( 7, 3, \substack{1\\2}, \substack{4\\5\\6})\big{\}}, \nonumber 
\end{align*}
%
where we have reordered the partitions so that $\cB \cup \cC = \{1,2,3\}$.
 By the proof of Theorem \ref{thm:FFKeel} and Equation (\ref{F3ineq07}),
\begin{align*}
D \cdot \sum_{C \in F^{(1:1:2:3)}_{123}} C
        =4 &\sum_{|I \cap \{1,2,3\}| = 2} s_I + (-4\a + 12\m)(p_1 + p_2 + p_3)
+ (3\a + 12\m) (p_4 + \ldots + p_7). \nonumber 
\end{align*}
Substituting for $c_{123}$ gives
$12 c_{123} -4 \a (p_1 + p_2 + p_3) + 3\a ( p_4+ \ldots + p_7) \geq 0.$

Finally, for
\begin{align}
F^{(1:2:2:2)}_{123} = \big{\{}&C(1, \substack{2\\3}, \substack{4\\5}, \substack{6\\7}), C(1, \substack{2\\3}, \substack{4\\6}, \substack{5\\7}), C(1, \substack{2\\3}, \substack{4\\7}, \substack{5\\6}), C(2, \substack{1\\3}, \substack{4\\5}, \substack{6\\7}), C(2, \substack{1\\3}, \substack{4\\6}, \substack{5\\7}), \nonumber \\
&  C(2, \substack{1\\3}, \substack{4\\7}, \substack{5\\6}), C(3, \substack{1\\2}, \substack{4\\5}, \substack{6\\7}), C(3, \substack{1\\2}, \substack{4\\6}, \substack{5\\7}), C(3, \substack{1\\2}, \substack{4\\7}, \substack{5\\6}) \big{\}},\nonumber
\end{align}
we obtain
$12 c_{123} + (-6\a - 27 \l + 15 \m)(p_1 + p_2 + p_3) 
 + (-9\a - 27 \l + 15 \m) (p_4 + \ldots + p_7) \geq 0$.

Suppose next that $\a > 0$. The first inequality for $c_{123}$ implies $c_{123} \geq 0$ provided that $\frac{4}{3}(p_1 + p_2 + p_3) \geq p_4 + \ldots + p_7$. Supposing then that $p_4 + \ldots + p_7 > \frac{4}{3}(p_1 + p_2 + p_3)$ and picking $\a, \l,$ and $\m$ such that $9 \a + 27 \l - 15 \m \geq 0$, the second inequality for $c_{123}$ implies
\begin{equation}
12 c_{123} \geq (18 \a + 63 \l - 35 \m)(p_1 + p_2 + p_3), \nonumber
\end{equation}
so the conditions on $\a$, $\l$, and $\m$ that extend the proof from the Keel subspace ensure that the basis candidate under consideration does not yield a basis. Nevertheless, for $\a \neq 0$ and $18 \a + 63\l - 35 \m = 0$, the rank of the intersection matrix $P$ of Lemma \ref{basisM07} is six, thus proving the following proposition.
\begin{proposition}
\label{prop:M07codim1}
Let $[D]$ be an $F$-nef divisor class in the codimension one subpace generated by the $[S_I]$ and $[P_i]$, where $\a > 0$, $9 \a + 27 \l - 15 \m \geq 0$, and $18 \a + 63\l - 35 \m = 0$. Then the obvious representative is an effective sum of boundary divisors.
\end{proposition}
 
 Outside of this codimension one subspace, the obvious representative of an $F$-nef divisor class written as in Equation (\ref{DM07}) can have boundary terms with negative coefficients. To find an effective representative, we use Keel relations to obtain a new representative, and combine $F$-inequalities to prove that all boundary coefficients of this new representative are non-negative. Within the codimension one subspace of Proposition \ref{prop:M07codim1}, we followed the proof of Fulton's conjecture for the Keel subspace (Theorem \ref{thm:FFKeel}) to find suitable combinations of $F$-inequalities. Outside of this subspace, we use instead the simplex algorithm (see \cite{MR0201189}, or \cite{MR1311028} for a succinct, geometrical account).
 
A principal use of the simplex algorithm is to minimize (or maximize) a linear functional subject to linear constraints. The algorithm terminates if the linear functional is expressed so that any move within the feasible region results in either no change, or an increase (or decrease) to the functional. By convexity, local minima (or maxima) give the global minimum (or maximum) of the functional.

In the present case, we take the coefficients of boundary divisors of the obvious representative $D$ as the linear functionals, and use the simplex algorithm to obtain lower bounds subject to the $F$-inequalities. These bounds can then be verified by hand, since the coefficient in question is given at the last stage of the algorithm as a non-negative sum of the inequalities defining the feasible region. We use the implementation of the simplex algorithm in $lp\_solve$ (\cite{lpsolve}) because it enables us to read off the linear functional from the final stage.

We may assume that $c_{123}$ is the most negative of the boundary coefficients, and, by scaling, that $c_{123} = -1$. We fix a basis by setting $\a=3, \l=5$, and $\m=9$.
Consider first the average of Keel relations for $-[\D_{123}]$ as in \cite{MR1938752}:
\begin{align}\label{D123avgM07}
-[\D_{123}] = &\frac{1}{3} \sum_{a,b \in \{1,2,3\}}[\D_{ab}] 
+ \frac{1}{6}\sum_{x,y \in \{4,5,6,7\}}[\D_{xy}]
- \frac{1}{6}\sum_{\substack{a \in \{1,2,3\},\\ x,y \in \{4,5,6,7\}}}[\D_{axy}]\\
&+ \frac{1}{2}\sum_{x,y,z \in \{4,5,6,7\}}[\D_{xyz}] 
- \frac{1}{6}\sum_{\substack{a \in \{1,2,3\}, \\ x \in \{4,5,6,7\}}}[\D_{ax}]  
  \nonumber
\end{align}
(we stipulate without further mention that indices in expressions as above are distinct and chosen to avoid double counting). Substituting the average of the obvious representative of $-\D_{123}$ from (\ref{D123avgM07})  into $D$ yields an effective representative provided that the following four sets of inequalities are satisfied: $(i)$ $c_{abx} \geq 0$, $(ii)$ $c_{axy} \geq 1/6$, $(iii)$ $c_{xyz} \geq -1/2$, and $(iv)$ $c_{ax} \geq 1/6$ for all $a,b \in \{1,2,3\}$ and $x,y,z \in \{4,5,6,7\}$. To finish the proof, we consider each of these four collections of inequalities, and show that either a given inequality is satisfied, or that we can replace $-\D_{123}$ by a different average so that the resulting divisor is an effective sum of boundary. In most cases, we will actually find a sharper bound than is required.

For $(i)$, we prove $c_{abx} \geq 3$ for all $a, b \in \{1,2,3\}$ and all $x \in \{4,5,6,7\}$. By symmetry, it suffices to prove $c_{124} \geq 3$. The last stage of the simplex algorithm gives $c_{124}$ as the intersection of $D$ with the following sum of $F$-curves:
\begin{gather*}
\frac{1}{10}(C(1,2,5,\substack{3\\4\\6\\7}) + C(1,\substack{2\\3}, \substack{4\\7}, \substack{5\\6}) + C\big(2,\substack{1\\3}, \substack{4\\5}, \substack{6\\7}) + C(2, \substack{1\\3}, \substack{4\\6}, \substack{5\\7}) + C(2, \substack{1\\3}, \substack{4\\7}, \substack{5\\6}) + C(5, \substack{1\\2}, \substack{3\\4}, \substack{6\\7})\big) 
\\
+\frac{4}{15} C(2,5,\substack{1\\3}, \substack{4\\6\\7})
+ \frac{1}{30}\big(C(2,4, \substack{6\\7}, \substack{1\\3\\5}) + C(2,5, \substack{3\\4}, \substack{1\\6\\7}) + C(2,6,\substack{1\\3}, \substack{4\\5\\7}) + C(2,7, \substack{1\\3}, \substack{4\\5\\6})\big) 
+\frac{4}{5} C(3,4, \substack{1\\2}, \substack{5\\6\\7})
\\%
+\frac{1}{6}\big(C(2,6, \substack{5\\7}, \substack{1\\3\\4}) + C(2,7,\substack{5\\6}, \substack{1\\3\\4}) + C(1, \substack{2\\3}, \substack{4\\5}, \substack{6\\7})\big) %
+\frac{2}{15}\big(C(1,6, \substack{3\\4}, \substack{2\\5\\7}) + C(1,7, \substack{3\\4}, \substack{2\\5\\6}) + C(1, \substack{2\\3}, \substack{4\\6}, \substack{5\\7})\big) 
  \\
+\frac{1}{30}\big(C(1,4,6, \substack{2\\3\\5\\7}) + C(1,5, \substack{2\\3}, \substack{4\\6\\7}) + C(1,6, \substack{2\\3}, \substack{4\\5\\7}) + C(1,7, \substack{2\\3}, \substack{4\\5\\6})  +C(1,7,\substack{5\\6}, \substack{2\\3\\4}) \big)
+\frac{1}{15}\big(C(1,4,5, \substack{2\\3\\6\\7})\big)
\\
+\frac{1}{5}\big(C(1,2,6, \substack{3\\4\\5\\7}) + C(1,2,7, \substack{3\\4\\5\\6}) + C(3,6, \substack{1\\2}, \substack{4\\5\\7}) + C(3,7,\substack{1\\2}, \substack{4\\5\\6}) + C(1,3,5, \substack{2\\4\\6\\7})\big)  
 + \frac{1}{15}C(2,4,\substack{1\\3}, \substack{5\\6\\7})\big) .
\end{gather*}
Intersecting the above with $D$ and substituting $-3 \, c_{123} = 3$ gives $c_{124} \geq 3$.

The remaining three sets of inequalities fail, so we must find different average expressions for $-[\D_{123}]$ in each case. In particular, we must now specify which of the Keel relations involving $\D_{123}$ to include in the average. Each four-tuple $(ijkl)$ determines three Keel relations, and $\D_{123}$ appears in these relations precisely when (up to reordering) $i,j \in \{1,2,3\}$ and $k,l \in \{4,5,6,7\}$. We will write the two relations involving $\D_{123}$ as $(ij)(kl) = (ik)(jl)$ and $(ij)(kl) = (il)(jk)$. To prove non-negativity of boundary coefficients for the new representative of $D$ requires several sums of $F$-curves as with $c_{abx} \geq 3$, so we give only the resulting bounds below, and record the sums of $F$-curves giving these bounds in an appendix.

{\bf Case $(ii)$: $-1 \leq c_{axy} \leq 1/6$.}
By symmetry, we consider only $c_{145}$. The Keel relation $(23)(45) = (24)(35)$ gives
\begin{align}
-[\D_{123}] = & [\D_{236}] + [\D_{237}] +[\D_{145}] +  [\D_{456}] + [\D_{457}] + [\D_{23}] + [\D_{45}] \nonumber \\
&-([\D_{124}]+ [\D_{246}] + [\D_{247}] + [\D_{135}] + [\D_{356}] + [\D_{357}]) - ([\D_{24}] + [\D_{35}]). \nonumber
\end{align}
Substituting the obvious representative of $-\D_{123}$ produces an effective representative of $D$ provided no $c_{ijk}$ besides $c_{123}$ and $c_{145}$ is negative, $c_{246} \geq 1$, and $c_{24} \geq 1$, since the other inequalities follow from symmetry.

We obtain $c_{246} \geq 17/3$ by substituting $-44/7 \, c_{123} = 44/7$ into the sum of the inequality $-26/7 \, c_{145} \geq -26/42$ and the intersection of $D$ with the sum of $F$-curves $C^{(ii)}_{246}$ of the appendix. The bound $c_{24} \geq 1$ follows by substituting $-8/7 \, c_{123} = 8/7$ into the sum of $-6/7 \, c_{145} \geq -1/7$ and the intersection of $D$ with the sum of $F$-curves $C^{(ii)}_{24}$ of the appendix.

It remains to show that no other $c_{ijk}$ can be negative. By symmetry, the following inequalities (more than) suffice after after substituting $c_{123} = -1$:
\begin{enumerate}
\item  $c_{146} \geq 73/6$, which follows from  $-65/7 \, c_{145} + D \cdot C^{(ii)}_{146}\geq -65/42$;
 \item $c_{167} \geq 83/6,$ which follows from  $-7\, c_{145} + D \cdot C^{(ii)}_{167} \geq -7/6$;
 \item $c_{245} \geq 37/6,$ which follows from  $-29/7 \, c_{145} + D \cdot C^{(ii)}_{245}  \geq -29/42$;
 \item $c_{267} \geq 59/6,$ which follows from  $-25/7 \, c_{145} + D \cdot C^{(ii)}_{267} \geq -25/42$;
 \item $c_{456} \geq 9,$ which follows from  $-36/7 \, c_{145} + D \cdot C^{(ii)}_{456} \geq -6/7$; and
\item $c_{467} \geq 13/2,$ which follows from $-45/7 \, c_{145} +  D \cdot C^{(ii)}_{467}\geq -45/42$.
\end{enumerate}

{\bf Case $(iii)$: $-1 \leq c_{xyz} \leq 0$.}
For Equation (\ref{D123avgM07}) to give an effective representative, we require only the upper bound $c_{456} \leq -1/2$, but considering $c_{456} \leq 0$ simplifies case $(iv)$. The divisors $\D_{123}$ and $\D_{456}$ both appear in 18 Keel relations corresponding to the four-tuples (1245), (1246), (1256), (1345), (1346), (1356), (2345), (2346), and (2356), so we average over these relations, that is, we average over (12)(45) = (14)(25), (12)(45) = (15)(24), $\ldots \,$, (23)(56) = (25)(36), (23)(56) = (26)(35) to obtain:
%
\begin{align*}
-[\D_{123}] = &[\D_{456}] -\frac{2}{9}\sum_{\substack{a \in \{1,2,3\}, \\ x \in \{4,5,6\}}}\big( [\D_{ax}]  + [\D_{ax7}] \big) 
-\frac{1}{9}\big( \sum_{\substack{a,b \in \{1,2,3\}, \\ x \in \{4,5,6\}}} [\D_{abx}] + \sum_{\substack{a \in \{1,2,3\}, \\ x,y \in \{4,5,6\}}}[\D_{axy}] \big)\\
&+\frac{1}{3}\big( \sum_{a,b \in \{1,2,3\}} [\D_{ab}] +\sum_{x,y \in \{4,5,6\}} [\D_{xy}] 
        +\sum_{a,b \in \{1,2,3\}} [\D_{ab7}] + \sum_{x,y \in \{4,5,6\}} [\D_{xy7}] \big). \nonumber 
\end{align*}
%
The following bounds show that substituting the obvious representatives from the above average for $-\D_{123}$ gives an effective representative of $D$:
\begin{enumerate}
\item $c_{14} \geq 1$, which follows from  $-1/3 \, c_{456} + D \cdot C^{(iii)}_{14} \geq 0$;
\item $c_{145} \geq 13/3$, which follows from $-4/3 \, c_{456} + D \cdot C^{(iii)}_{145} \geq 0$;
\item $c_{147} \geq 53/45$, which follows from $-7/45 \, c_{456} + D \cdot C^{(iii)}_{147} \geq 0$; and
\item $c_{457} \geq 2$, which follows from $-c_{456} + D \cdot C^{(iii)}_{457} \geq 0$.
\end{enumerate}

{\bf Case $(iv)$: $c_{ax} \leq 1/6$.}
We may now assume that $c_{axy} \geq 1/6$ and $c_{xyz} \geq 0$ for all $a \in \{1,2,3\}$ and $x,y,z \in \{4,5,6,7\}$. Suppose $c_{14} \leq 1/6$. We average over the 18 Keel relations involving $\D_{123}$ but not $\D_{14}$ corresponding to the four-tuples (1256), (1257), (1267), (1356), (1357), (1367), (2356), (2357), and (2367), that is, over the relations $(12)(56) = (15)(26)$, $(12)(56) = (16)(25)$, \ldots, $(23)(67) = (26)(37)$, $(23)(67) = (27)(36)$:
\begin{align*}
-[\D_{123}] = &[\D_{567}] -\frac{2}{9}\sum_{\substack{a \in \{1,2,3\}, \\ x \in \{5,6,7\}}}\big( [\D_{ax}]  + [\D_{a4x}] \big)
-\frac{1}{9}\big( \sum_{\substack{a,b \in \{1,2,3\}, \\ x \in \{5,6,7\}}} [\D_{abx}] + \sum_{\substack{a \in \{1,2,3\}, \\ x,y \in \{5,6,7\}}}[\D_{axy}] \big)
 \nonumber \\
        &+\frac{1}{3}\big(\sum_{a,b \in \{1,2,3\}} [\D_{ab}] +  \sum_{x,y \in \{5,6,7\}} [\D_{xy}] 
        +\sum_{a,b \in \{1,2,3\}} [\D_{ab4}] + \sum_{x,y \in \{5,6,7\}} [\D_{4xy}] \big). \nonumber \\
\end{align*}
The following bounds show that substituting the obvious representative for $-\D_{123}$ from the above average makes $D$ an effective sum of boundary:
\begin{enumerate}
\item $c_{15} \geq 7/6$, which follows from $-c_{14} +  D \cdot C^{(iv)}_{15}\geq -1/6$;
\item $c_{145} \geq 257/198$, which follows from $-7/33 \, c_{14} + D \cdot C^{(iv)}_{145} \geq -7/198$; and
\item $c_{245} \geq 41/36$, which follows from $-7/6 \, c_{14} + D \cdot C^{(iv)}_{245} \geq -7/36$.
\end{enumerate}
This completes the proof of Theorem \ref{thm:Fnef-M07}.
 \end{proof}
\section{Appendix: $F$-curves used in the proof of theorem \ref{thm:Fnef-M07}}\label{appendix}
We record the sums of $F$-curves that give the inequalities on boundary coefficients used in the proof of Theorem \ref{thm:Fnef-M07}.
\newline \newline
\noindent{\bf Case $(ii)$}: $-1 \leq c_{145} \leq 1/6$.
\newline
\noindent For $c_{246} \geq 17/3$:
\begin{align*}
\begin{array}{rc}
C^{(ii)}_{246} =& \frac{7}{30} \big(C(1,5,6, \substack{2\\3\\4\\7}) + C(2,\substack{1\\3},\substack{4\\7}, \substack{5\\6}) + C(3, \substack{1\\2}, \substack{4\\6}, \substack{5\\7})\big) +\frac{43}{70}C(1,3, \substack{4\\5}, \substack{2\\6\\7})
+\frac{22}{105}C(5,6, \substack{1\\4}, \substack{2\\3\\7})
\\
&+\frac{1}{3}\big((C(3,4,7, \substack{1\\2\\5\\6}) + C(2,7, \substack{4\\6}, \substack{1\\3\\5}) + C(4,\substack{1\\5}, \substack{2\\6},\substack{3\\7})\big)
+\frac{22}{105}C(1,2, \substack{4\\5}, \substack{3\\6\\7})
+\frac{51}{210}C(5,7, \substack{1\\4}, \substack{2\\3\\6}) 
 \\
&+\frac{11}{35}\big(C(1,3,6, \substack{2\\4\\5\\7}) + C(2,3,4, \substack{1\\5\\6\\7})\big)
+\frac{11}{30}\big((C(1, \substack{2\\3}, \substack{4\\5}, \substack{6\\7}) + C(2, \substack{1\\3}, \substack{4\\6}, \substack{5\\7}) + C(3, \substack{1\\2}, \substack{4\\7}, \substack{5\\6}) \big)
\\
&+\frac{29}{70}C(2,3,5, \substack{1\\4\\6\\7}) +\frac{2}{105}C(2,3,6, \substack{1\\4\\5\\7}) 
+\frac{19}{42}\big((C(3,4,5, \substack{1\\2\\6\\7}) + C(2,4, \substack{6\\7}, \substack{1\\3\\5})\big)
 \\
&+\frac{46}{105}C(1,4, \substack{2\\3}, \substack{5\\6\\7})
+\frac{4}{7}C(1,5, \substack{2\\3}, \substack{4\\6\\7})
+\frac{58}{105}C(1,6, \substack{4\\5}, \substack{2\\3\\7})
+\frac{9}{10}C(1,7, \substack{2\\3}, \substack{4\\5\\6}) \\
&+\frac{1}{30}\big(C(2,4, \substack{1\\3}, \substack{5\\6\\7}) 
 + C(3,5, \substack{1\\2}, \substack{4\\6\\7})\big)
 +\frac{11}{70}C(3,7, \substack{1\\2}, \substack{4\\5\\6})
+\frac{17}{210}C(3, \substack{1\\2}, \substack{4\\5}, \substack{6\\7})  
\\
&+\frac{24}{35}C(2,5, \substack{1\\3}, \substack{4\\6\\7})
+\frac{33}{70}C(2,6, \substack{1\\3}, \substack{4\\5\\7})
+\frac{13}{105}\big((C(2,7, \substack{1\\3}, \substack{4\\5\\6}) + C(3,6, \substack{1\\2}, \substack{4\\5\\7})\big) \\
&
 +\frac{1}{10}\big((C(4,5, \substack{2\\6}, \substack{1\\3\\7}) + C(4, \substack{1\\5}, \substack{2\\7}, \substack{3\\6})\big)
 +\frac{10}{21}C(4,6, \substack{1\\5}, \substack{2\\3\\7})
 +\frac{23}{210}C(4,7, \substack{1\\5}, \substack{2\\3\\6}).
\end{array}
\end{align*}

\noindent For $c_{24} \geq 1$:
\begin{align*}
\begin{array}{rc}
C^{(ii)}_{24} =&
\frac{4}{35} C(1,2,5, \substack{3\\4\\6\\7})
+\frac{5}{210} \big(C(1,3,5, \substack{2\\4\\6\\7}) + C(1,6, \substack{2\\3}, \substack{4\\5\\7}) + C(3,5, \substack{1\\4}, \substack{2\\6\\7})\big)  
+\frac{16}{105} C(2,4,5, \substack{1\\3\\6\\7}) 
\\
&+\frac{11}{210} C(3,4,5, \substack{1\\2\\6\\7})
+\frac{13}{210} \big(C(1,5, \substack{2\\3}, \substack{4\\6\\7}) + C(3,6, \substack{1\\2}, \substack{4\\5\\7}) + C(3,7, \substack{1\\2}, \substack{4\\5\\6})\big) 
+\frac{11}{105} C(2,4, \substack{1\\3}, \substack{5\\6\\7})
\\
&+\frac{37}{210} C(1,4, \substack{2\\3}, \substack{5\\6\\7}) 
+\frac{1}{7} C(1,6, \substack{4\\5}, \substack{2\\3\\7}) 
+\frac{1}{6} C(1,7, \substack{2\\3}, \substack{4\\5\\6})
+\frac{13}{105} C(2,3, \substack{4\\5}, \substack{1\\6\\7})
+\frac{1}{10} C(2,4, \substack{5\\6}, \substack{1\\3\\7})
 \\
&+\frac{31}{210} C(2,4, \substack{1\\5}, \substack{3\\6\\7}) 
+\frac{1}{15} \big(C(2,4, \substack{3\\6}, \substack{1\\5\\7}) + C(2, 4, \substack{3\\7}, \substack{1\\5\\6}) + C(3,5, \substack{6\\7}, \substack{1\\2\\4})\big)  \\
&+\frac{1}{210} C(2,4, \substack{5\\7}, \substack{1\\3\\6}) 
+\frac{1}{105} C(2,4, \substack{6\\7}, \substack{1\\3\\5})
+\frac{2}{21} \big(C(2,6, \substack{5\\7}, \substack{1\\3\\4})  + C(2, 7, \substack{1\\3}, \substack{4\\5\\6})\big) \\
&+\frac{1}{14} \big(C(4,6, \substack{1\\5}, \substack{2\\3}) + C(4,7, \substack{1\\5}, \substack{2\\3\\6}) + C(5,6, \substack{1\\4}, \substack{2\\3\\7}) + C(5,7, \substack{1\\4}, \substack{2\\3\\6})\big)\\
&+\frac{11}{70} C(1, \substack{2\\3}, \substack{4\\5}, \substack{6\\7}) 
+\frac{1}{30} \big(C(3, \substack{1\\2}, \substack{4\\6}, \substack{5\\7}) + C(3, \substack{1\\2}, \substack{4\\7}, \substack{5\\6})\big)
+\frac{3}{14} C(2,3,4, \substack{1\\5\\6\\7}).
\end{array}
\end{align*}

\noindent For $c_{146} \geq 73/6$:
\begin{align*}
\begin{array}{rc}
C^{(ii)}_{146} =& \frac{22}{15} C(1,2,5, \substack{3\\4\\6\\7}) 
 + \frac{1}{14} C(1,3,5, \substack{2\\4\\6\\7}) 
  + \frac{7}{30} C(1,3,6, \substack{2\\4\\5\\7}) 
   + \frac{206}{105} C(2,3,4, \substack{1\\5\\6\\7}) 
  + \frac{19}{35} C(2,6, \substack{1\\3}, \substack{4\\5\\7}) 
\\
& + \frac{1}{3} \big(C(3,4,7, \substack{1\\2\\5\\6}) + C(4,5,7, \substack{1\\2\\3\\6}) + C(2,6, \substack{5\\7}, \substack{1\\3\\4})\big) 
 + \frac{242}{105} C(1,4, \substack{2\\3}, \substack{5\\6\\7}) 
+ \frac{16}{105} C(1,6,\substack{2\\3}, \substack{4\\5\\7}) 
    \\
&  + \frac{12}{7} C(1,6,\substack{4\\5}, \substack{2\\3\\7})
       + \frac{1}{3} \big(C(3,5,\substack{1\\4}, \substack{2\\6\\7}) + C(2, \substack{1\\3}, \substack{4\\6}, \substack{5\\7}) + C(4, \substack{1\\5}, \substack{2\\6}, \substack{3\\7}) \big)
+ \frac{4}{5} \big(C(2, \substack{1\\3}, \substack{4\\7}, \substack{5\\6})\big)  
\\
& + \frac{1}{10} C(2,3, \substack{4\\6}, \substack{1\\5\\7})
         + \frac{46}{105} C(2,4, \substack{1\\5}, \substack{3\\6\\7}) 
                   + \frac{12}{35} \big(C(2,7, \substack{1\\3}, \substack{4\\5\\6}) + C(3,7, \substack{1\\2}, \substack{4\\5\\6}) \big) 
+  \frac{4}{5}\big(C(3, \substack{1\\2}, \substack{4\\7}, \substack{5\\6}) \big)
\\
& + \frac{11}{105} C(3,4, \substack{1\\5}, \substack{2\\6\\7})
           + \frac{223}{210} C(3,5, \substack{1\\2}, \substack{4\\6\\7}) 
             + \frac{16}{35} \big(C(4,6, \substack{1\\5}, \substack{2\\3\\7}) + C(5,7, \substack{1\\4}, \substack{2\\3\\6}) \big)
+ \frac{62}{105} C(4,7, \substack{1\\5}, \substack{2\\3\\6})
\\
 &  + \frac{44}{35} \big(C(5,6, \substack{1\\4}, \substack{2\\3\\7}) + C(1, \substack{2\\3}, \substack{4\\5}, \substack{6\\7}) \big) 
                    + \frac{12}{5} C(1,7,\substack{2\\3}, \substack{4\\5\\6})
                    + \frac{13}{42} C(3,6,\substack{1\\2}, \substack{4\\5\\7})\\
&+ \frac{2}{3} C(3, \substack{1\\2}, \substack{4\\6}, \substack{5\\7})
               + \frac{33}{35} C(5, \substack{1\\4}, \substack{2\\3}, \substack{6\\7})
                + \frac{11}{15} C(2,3,5, \substack{1\\4\\6\\7}) 
                 + \frac{103}{210} C(2,3, \substack{4\\5}, \substack{1\\6\\7}).
               \end{array}
\end{align*}

\noindent For $c_{167} \geq 83/6$:
\begin{align*}
\begin{array}{rc}
C^{(ii)}_{167} =& \frac{1}{3} \big(C(1,2,7,\substack{3\\4\\5\\6})+ C(1,4,7,\substack{2\\3\\5\\6}) + C(1,5,6, \substack{2\\3\\4\\7}) + C(2,3,6, \substack{1\\4\\5\\7})\big)
+\frac{2}{15}C(1,5,\substack{2\\3}, \substack{4\\6\\7})
\\
& +\frac{37}{30}C(1,3,5, \substack{2\\4\\6\\7}) 
+ \frac{1}{3}\big(C(2,6,7, \substack{1\\3\\4\\5})  
+ C(3,4, \substack{5\\6}, \substack{1\\2\\7}) + C(3,6, \substack{4\\7}, \substack{1\\2\\5})\big) 
+\frac{9}{10}C(3,7,\substack{1\\2}, \substack{4\\5\\6}) 
\\
&+\frac{29}{15}C(2,3,4, \substack{1\\5\\6\\7}) 
+\frac{19}{30}C(3,4,5, \substack{1\\2\\6\\7}) 
+\frac{19}{15} \big(C(1,4, \substack{2\\3}, \substack{5\\6\\7}) + C(2,5,\substack{1\\3}, \substack{4\\6\\7})\big)
+\frac{7}{3}C(1, \substack{2\\3}, \substack{4\\5}, \substack{6\\7})
\\
&+\frac{7}{30} \big(C(1,3,\substack{4\\5}, \substack{2\\6\\7}) + C(2,6,\substack{1\\3}, \substack{4\\5\\7}) + C(2,7,\substack{1\\3}, \substack{4\\5\\6}) + C(3,6,\substack{1\\2}, \substack{4\\5\\7}\big)
+\frac{13}{30}C(3,\substack{1\\2}, \substack{4\\7}, \substack{5\\6})
\\
&+\frac{5}{3}C(1,6,\substack{2\\3}, \substack{4\\5\\7})
+\frac{2}{3} \big(C(1,7,\substack{2\\3}, \substack{4\\5\\6}) + C(1,7,\substack{4\\5}, \substack{2\\3\\6})\big)
+\frac{1}{6}C(2,4, \substack{1\\5}, \substack{3\\6\\7}) 
\\
&+\frac{1}{30} \big(C(2,5,\substack{1\\4}, \substack{3\\6\\7}) + C(3,5,\substack{1\\4}, \substack{2\\6\\7})\big)
+\frac{12}{15}C(2,5, \substack{3\\4}, \substack{1\\6\\7}) 
+\frac{1}{5}C(3,5,\substack{1\\2}, \substack{4\\6\\7}) 
\\
&+\frac{5}{6} \big(C(4,6,\substack{1\\5}, \substack{2\\3\\7}) + C(5,7,\substack{1\\4}, \substack{2\\3\\6})\big)
+\frac{1}{2} \big(C(4,7, \substack{1\\5}, \substack{2\\3\\6}) + C(5,6,\substack{1\\4}, \substack{2\\3\\7})\big)
\\
&+\frac{2}{5}C(2, \substack{1\\3}, \substack{4\\5}, \substack{6\\7}) 
+\frac{23}{30} \big(C(2,\substack{1\\3}, \substack{4\\6}, \substack{5\\7}) + C(2, \substack{1\\3}, \substack{4\\7}, \substack{5\\6}) + C(3, \substack{1\\2}, \substack{4\\6}, \substack{5\\7})\big).
\end{array}
\end{align*}
\noindent For $c_{245} \geq 37/6$:
\begin{align*}
\begin{array}{rc}
C^{(ii)}_{245} =& \frac{17}{14} C(1,2,4, \substack{3\\5\\6\\7})
+\frac{59}{70} C(2,3,5, \substack{1\\4\\6\\7}) 
+\frac{29}{210} C(1,4,\substack{2\\3}, \substack{5\\6\\7}) 
+\frac{11}{21} C(1,5, \substack{2\\3}, \substack{4\\6\\7}) 
+\frac{13}{35} C(2,7,\substack{1\\3}, \substack{4\\5\\6})
\\
& +\frac{6}{7} C(1,6, \substack{4\\5}, \substack{2\\3\\7}) 
 +\frac{1}{105} \big(C(2,4,\substack{5\\7}, \substack{1\\3\\6}) + C(2,7, \substack{4\\6}, \substack{1\\3\\5}) + C(4,5, \substack{2\\6}, \substack{1\\3\\7}) \big) 
+\frac{26}{105} C(2, \substack{1\\3}, \substack{4\\5}, \substack{6\\7}) 
\\
&+\frac{18}{35} C(2,5, \substack{1\\4}, \substack{3\\6\\7}) 
+\frac{8}{21} C(2,6, \substack{1\\3}, \substack{4\\5\\7}) 
 +\frac{34}{105} \big(C(2,7, \substack{5\\6}, \substack{1\\3\\4}) + C(5,6, \substack{2\\4}, \substack{1\\3\\7}) \big) 
+\frac{1}{7} C(1,6, \substack{2\\3}, \substack{4\\5\\7})\big)
\\
&+\frac{7}{10} C(3,4, \substack{1\\2}, \substack{5\\6\\7}) 
+\frac{3}{70} C(3,4, \substack{1\\5}, \substack{2\\6\\7}) 
+\frac{16}{35} C(3,4, \substack{2\\5}, \substack{1\\6\\7}) 
+\frac{5}{14} C(3,5, \substack{1\\4}, \substack{2\\6\\7}) 
+ \frac{1}{7}C(1,7, \substack{2\\3}, \substack{4\\5\\6})
\\
&+\frac{11}{42} \big(C(4,6, \substack{1\\5}, \substack{2\\3\\7}) + C(4,7, \substack{1\\5}, \substack{2\\3\\6}) + C(5,6, \substack{1\\4}, \substack{2\\3\\7}) + C(5,7, \substack{1\\4}, \substack{2\\3\\6}) \big) 
+ \frac{1}{42}C(2, \substack{1\\3}, \substack{4\\6}, \substack{5\\7})
\\
&+\frac{6}{35} \big(C(3,6, \substack{1\\2}, \substack{4\\5\\7}) + C(3,7, \substack{1\\2}, \substack{4\\5\\6})\big)
+\frac{1}{3} C(4,5, \substack{2\\7}, \substack{1\\3\\6}) 
+\frac{76}{105} C(1, \substack{2\\3}, \substack{4\\5}, \substack{6\\7})
\\
&+\frac{1}{30} C(2, \substack{1\\3}, \substack{4\\7}, \substack{5\\6}) 
+\frac{8}{35} C(3, \substack{1\\2}, \substack{4\\5}, \substack{6\\7}) 
+\frac{2}{5} \big(C(3, \substack{1\\2}, \substack{4\\6}, \substack{5\\7}) + C(3, \substack{1\\2}, \substack{4\\7}, \substack{5\\6}) \big).
\end{array}
\end{align*}

\noindent For $c_{267} \geq 59/6$:
\begin{align*}
\begin{array}{rc}
C^{(ii)}_{267} = 
&  \frac{8}{21} \big(C(4, 6, \substack{1\\5}, \substack{2\\3\\7})+C( 4, 7, \substack{1\\5}, \substack{2\\3\\6})+C( 5, 6, \substack{1\\4}, \substack{2\\3\\7})+ C( 5, 7, \substack{1\\4}, \substack{2\\3\\6}) + C( 2, \substack{1\\3}, \substack{4\\5}, \substack{6\\7}) \big) 
\\
&+\frac{104}{105} C(1, 2, 5, \substack{3\\4\\6\\7}) 
  + \frac{1}{3} \big(C(2, 3, 6, \substack{1\\4\\5\\7}) + C( 2, 3, 7, \substack{1\\5\\6\\7}) + C( 3, 6, 7, \substack{1\\2\\4\\5}) \big) 
+ \frac{8}{105} C(3, 4, 5, \substack{1\\2\\6\\7}) 
\\
&   + \frac{47}{35} C(1, 4, \substack{2\\3}, \substack{5\\6\\7}) 
    + \frac{3}{2} C(1, 6, \substack{2\\3}, \substack{4\\5\\7}) 
     + \frac{31}{42} C(1, 7, \substack{2\\3}, \substack{4\\5\\6}) 
      + \frac{16}{21} C(1, 7, \substack{4\\5}, \substack{2\\3\\6}) 
 + \frac{22}{105} C(2, 4, \substack{1\\3}, \substack{5\\6\\7}) 
\\
&       + \frac{1}{10} C(2, 4, \substack{6\\7}, \substack{1\\3\\5}) 
                 + \frac{16}{105} \big(C(2, 6, \substack{1\\3}, \substack{4\\5\\7})+C( 2, 7, \substack{1\\3}, \substack{4\\5\\6})+C( 5, \substack{1\\4}, \substack{2\\3}, \substack{6\\7}) \big) 
+ \frac{107}{210} C(2, 5, \substack{6\\7}, \substack{1\\3\\4}) 
\\
 &  + \frac{1}{30} C(3, 4, \substack{1\\2}, \substack{5\\6\\7}) 
   + \frac{43}{42} C(3, 5, \substack{1\\2}, \substack{4\\6\\7}) 
   + \frac{1}{5} C(3, 5, \substack{1\\4}, \substack{2\\6\\7}) 
    + \frac{34}{105} C(3, \substack{1\\2}, \substack{4\\5}, \substack{6\\7}) 
\\
   & + \frac{5}{42} \big(C(3, 6, \substack{1\\2}, \substack{4\\5\\7}) + C( 3, 7, \substack{1\\2}, \substack{4\\5\\6}) \big) 
     + \frac{1}{2} \big(C(3, \substack{1\\2}, \substack{4\\6}, \substack{5\\7}) + C( 3, \substack{1\\2}, \substack{4\\7}, \substack{5\\6}) \big) 
\\
& + \frac{7}{15} \big(C(1, \substack{2\\3}, \substack{4\\5}, \substack{6\\7}) + C( 2, \substack{1\\3}, \substack{4\\6}, \substack{5\\7}) + C( 2, \substack{1\\3}, \substack{4\\7}, \substack{5\\6}) \big) 
 + \frac{25}{21} C(2, 3, 4, \substack{1\\5\\6\\7}) .
\end{array}
\end{align*}
\noindent For $c_{456} \geq 9$:
\begin{align*}
\begin{array}{rc}
C^{(ii)}_{456} = & \frac{1}{30} \big(C(2,4, \substack{1\\3}, \substack{5\\6\\7}) + C(2,5, \substack{1\\4}, \substack{3\\6\\7}) + C(3,4, \substack{1\\5}, \substack{2\\6\\7}) + C(4,5, \substack{2\\7}, \substack{1\\3\\6}) + C(4,5, \substack{3\\7}, \substack{1\\2\\6}) \big) 
\\
&+ \frac{29}{70} \big(C(1,2,5, \substack{3\\4\\6\\7}) + C(5,7, \substack{1\\4}, \substack{2\\3\\6})\big) 
  + \frac{11}{210} C(1,4,7, \substack{2\\3\\5\\6}) 
   + \frac{137}{105} C(1,6,7, \substack{2\\3\\4\\5}) 
+ \frac{31}{105} C(1,6, \substack{2\\3}, \substack{4\\5\\7}) 
\\
 &  + \frac{121}{105} C(2,3,5, \substack{1\\4\\6\\7}) 
   + \frac{5}{42} C(2,6,7, \substack{1\\3\\4\\5}) 
    + \frac{82}{105} C(1,4,\substack{2\\3}, \substack{5\\6\\7}) 
     + \frac{19}{14} C(1,5, \substack{2\\3}, \substack{4\\6\\7}) 
+ \frac{19}{42} C(2,6, \substack{1\\3}, \substack{4\\5\\7}) 
\\
& + \frac{3}{70} C(1,7, \substack{2\\3}, \substack{4\\5\\6}) 
 + \frac{47}{105} C(2,4, \substack{1\\5}, \substack{3\\6\\7}) 
  + \frac{3}{14} \big(C(2,4, \substack{5\\6}, \substack{1\\3\\7}) + C(5,7, \substack{2\\4}, \substack{1\\3\\6}) \big) 
+ \frac{4}{7} C(3,6, \substack{1\\2}, \substack{4\\5\\7}) 
\\
& + \frac{2}{7} C(2,7, \substack{1\\3}, \substack{4\\5\\6})
  + \frac{1}{3} \big(C(3,4, \substack{5\\6}, \substack{1\\2\\7}) + C(3,5, \substack{4\\7}, \substack{1\\2\\6}) \big) 
   + \frac{4}{35} C(3,5, \substack{2\\4}, \substack{1\\6\\7}) 
\\
& + \frac{17}{42} C(3,7, \substack{1\\2}, \substack{4\\5\\6}) 
  + \frac{23}{70} \big(C(4,6, \substack{1\\5}, \substack{2\\3\\7}) + C(5,6, \substack{1\\4}, \substack{2\\3\\7}) + C(3, \substack{1\\2}, \substack{4\\5}, \substack{6\\7}) \big) 
\\
& + \frac{38}{105} C(4,7, \substack{1\\5}, \substack{2\\3\\6}) 
 + \frac{41}{21} C(1, \substack{2\\3}, \substack{4\\5}, \substack{6\\7} ) 
  + \frac{1}{2} \big(C(2, \substack{1\\3}, \substack{4\\6}, \substack{5\\7}) + C(3, \substack{1\\2}, \substack{4\\6}, \substack{5\\7}) \big) 
\\
& + \frac{2}{7} C(2, \substack{1\\3}, \substack{4\\7}, \substack{5\\6}) 
 + \frac{1}{6} C(3, \substack{1\\2},\substack{4\\7}, \substack{5\\6}) 
  + \frac{19}{35} C(4, \substack{1\\5}, \substack{2\\3}, \substack{6\\7})
 + \frac{37}{30} C(2,3,4, \substack{1\\5\\6\\7}) .
\end{array}
\end{align*}
\noindent For $c_{467} \geq 13/2$:
\begin{align*}
\begin{array}{rc}
C^{(ii)}_{467} =& \frac{199}{420} C(1,2,4, \substack{3\\5\\6\\7}) 
 + \frac{541}{420} C(1,2,5, \substack{3\\4\\6\\7}) 
  + \frac{10}{7} C(1,6,7, \substack{2\\3\\4\\5}) 
   + \frac{27}{35} C(2,3,4, \substack{1\\5\\6\\7}) 
  + \frac{141}{140} C(4, \substack{1\\5}, \substack{2\\3}, \substack{6\\7} )
\\
& + \frac{7}{60} \big(C(4,5,7, \substack{1\\2\\3\\6}) + C(3,4, \substack{1\\5}, \substack{2\\6\\7}) + C(5,6, \substack{3\\7}, \substack{1\\2\\4}) \big) 
    + \frac{23}{84} C(1,3, \substack{4\\5}, \substack{2\\6\\7})   
   + \frac{4}{35} C(5, \substack{1\\4}, \substack{2\\3}, \substack{6\\7} ) 
\\
& + \frac{2}{7} \big(C(1,2, \substack{4\\5}, \substack{3\\6\\7}) + C(4,6, \substack{1\\5}, \substack{2\\3\\7}) + C(4,7, \substack{1\\5}, \substack{2\\3\\6}) \big) 
 + \frac{1}{30} C(2,4, \substack{1\\3}, \substack{5\\6\\7}) 
 + \frac{17}{60} C(5, \substack{1\\4}, \substack{2\\7}, \substack{3\\6})   
\\
& + \frac{1}{84} C(2,5, \substack{3\\4}, \substack{1\\6\\7}) 
  + \frac{13}{28} C(2,6, \substack{1\\3}, \substack{4\\5\\7}) 
   + \frac{209}{420} C(2,7, \substack{1\\3}, \substack{4\\5\\6}) 
   + \frac{131}{210} C(3,4, \substack{1\\2}, \substack{5\\6\\7}) 
  + \frac{1}{3} C(3,6, \substack{4\\7}, \substack{1\\2\\5})
\\
& + \frac{13}{60} \big(C(2,7, \substack{4\\6}, \substack{1\\3\\5}) + C(4,5, \substack{2\\7}, \substack{1\\3\\7}) \big) 
 + \frac{13}{10} C(3,5, \substack{1\\2}, \substack{4\\6\\7}) 
    + \frac{26}{105} C(3,6, \substack{1\\2}, \substack{4\\5\\7})
 \\
& + \frac{139}{420} C(3,7, \substack{1\\2}, \substack{4\\5\\6}) 
      + \frac{1}{4} \big(C(4,5, \substack{2\\6}, \substack{1\\3\\7})  + C(2, \substack{1\\3}, \substack{4\\7}, \substack{5\\6}) \big) 
      + \frac{267}{140} C(1, \substack{2\\3}, \substack{4\\5}, \substack{6\\7} ) 
\\
& + \frac{53}{210} \big(C(5,6,\substack{1\\4}, \substack{2\\3\\7}) + C(5,7, \substack{1\\4}, \substack{2\\3\\6}) \big) 
    + \frac{23}{60} C(3, \substack{1\\2}, \substack{4\\6}, \substack{5\\7} ) 
        + \frac{2}{15} C(3, \substack{1\\2}, \substack{4\\7}, \substack{5\\6} ) 
\\
& + \frac{1}{14} \big(C(1,6, \substack{4\\5}, \substack{2\\3\\7}) + C(1,7, \substack{4\\5}, \substack{2\\3\\6}) \big)    
+ \frac{31}{140} C(2,4, \substack{1\\5}, \substack{3\\6\\7}) 
       + \frac{1}{12} C(4,5, \substack{3\\6}, \substack{1\\2\\7}) 
.
 \end{array}
\end{align*}
\noindent {\bf Case $(iii)$: $-1 \leq c_{456} \leq 0$.} \newline
\noindent For $c_{14} \geq 1$:
\begin{align*}
\begin{array}{rc}
C^{(iii)}_{14} = &
\frac{1}{9} \big(C(1,4,5, \substack{2\\3\\6\\7}) + C(1,6, \substack{4\\7}, \substack{2\\3\\5}) \big) 
+\frac{1}{15} \big(C(1,4, \substack{5\\6}, \substack{2\\3\\7}) +C(2, \substack{1\\3}, \substack{4\\5}, \substack{6\\7}) + C(2, \substack{1\\3}, \substack{4\\6}, \substack{5\\7}) \big)
\\
&+\frac{4}{45} \big(C(1,4,6, \substack{2\\3\\5\\7}) + C(1,5, \substack{2\\3}, \substack{4\\6\\7}) \big) 
+\frac{1}{18} \big(C(1,3,4, \substack{2\\5\\6\\7}) + C(2,5,7, \substack{1\\3\\4\\6}) + C(2,6,7, \substack{1\\3\\4\\5})\big)
\\
& +\frac{1}{18}\big( C(3,5,\substack{4\\6}, \substack{1\\2\\7}) + C(4,7, \substack{5\\6}, \substack{1\\2\\3}) + C(1, \substack{2\\3}, \substack{4\\6}, \substack{5\\7}) \big)
+\frac{1}{45} \big(C(1,4, \substack{5\\7}, \substack{2\\3\\6}) + C(3,6, \substack{4\\5}, \substack{1\\2\\7}) \big)
\\
&+\frac{1}{30} C(3,6,7, \substack{1\\2\\4\\5}) 
+\frac{7}{90} \big(C(1,4, \substack{2\\7}, \substack{3\\5\\6}) + C(1, \substack{2\\3}, \substack{4\\5}, \substack{6\\7}) + C(3, \substack{1\\2}, \substack{4\\7}, \substack{5\\6}) \big) 
+\frac{1}{5} C(1,2,4, \substack{3\\5\\6\\7})  
\\
&+\frac{2}{45} \big(C(1,4, \substack{2\\3}, \substack{5\\6\\7}) + C(2,5, \substack{1\\3}, \substack{4\\6\\7}) + C(2,6, \substack{1\\3}, \substack{4\\5\\7}) \big) +\frac{13}{90} C(3,4, \substack{1\\2}, \substack{5\\6\\7}) 
+\frac{2}{15} C(1,4,7, \substack{2\\3\\5\\6})
\\
&+\frac{1}{90} C(3, \substack{1\\2}, \substack{4\\6}, \substack{5\\7})
+ \frac{2}{45}\big(C(3,5, \substack{1\\2}, \substack{4\\6\\7}) + C(3,6, \substack{1\\2}, \substack{4\\5\\7}) + C(3, \substack{1\\2}, \substack{4\\5}, \substack{6\\7}) \big).
\end{array}
 \end{align*}
\noindent For $c_{145} \geq 13/3$:
\begin{align*}
\begin{array}{rc}
C^{(iii)}_{145} = &\frac{1}{3} \big(C(1,2,4, \substack{3\\5\\6\\7}) + C(1,2,5, \substack{3\\4\\6\\7}) + C(2,5, \substack{1\\3}, \substack{4\\6\\7}) \big) 
+\frac{2}{45} \big(C(2,4,6, \substack{1\\3\\5\\7}) + C(4,7, \substack{3\\5}, \substack{1\\2\\6}) \big) 
\\
&+\frac{11}{45} C(3,5,6, \substack{1\\2\\4\\7}) 
+\frac{13}{90} \big(C(1,5, \substack{4\\6}, \substack{2\\3\\7}) + C(2,6, \substack{4\\5}, \substack{1\\2\\7}) + C(1, \substack{2\\3}, \substack{4\\7}, \substack{5\\6}) \big)
+\frac{17}{45} C(3,5, \substack{1\\2}, \substack{4\\6\\7})
\\
&+\frac{5}{9} C(1,6,\substack{2\\3}, \substack{4\\5\\7})
+\frac{1}{9} C(1,6, \substack{4\\5}, \substack{2\\3\\7}) 
+\frac{47}{90} C(1,7,\substack{4\\5}, \substack{2\\3\\6}) 
+\frac{8}{45} \big(C(2,4, \substack{5\\6}, \substack{1\\3\\7}) + C(3, \substack{1\\2}, \substack{4\\6}, \substack{5\\7}) \big)
\\
&+\frac{1}{45} \big(C(2,4, \substack{1\\3}, \substack{5\\6\\7}) + C(4, \substack{1\\3}, \substack{2\\7}, \substack{5\\6}) + C(7, \substack{1\\5}, \substack{2\\3}, \substack{4\\6}) \big) 
+\frac{1}{30} C(1,4,7, \substack{2\\3\\5\\6}) 
+\frac{7}{15} C(3,4, \substack{1\\2}, \substack{5\\6\\7}) 
\\
&+\frac{4}{45} \big(C(2,4, \substack{6\\7}, \substack{1\\3\\5}) + C(3,5, \substack{6\\7}, \substack{1\\2\\4}) + C(4,7, \substack{1\\5}, \substack{2\\3\\6}) \big)
+\frac{1}{10} C(5,7, \substack{3\\4}, \substack{1\\2\\6}) 
\\
&+\frac{7}{90} \big(C(2,6, \substack{1\\3}, \substack{4\\5\\7}) + C(4, \substack{1\\7}, \substack{2\\3}, \substack{5\\6}) + C(5, \substack{1\\7}, \substack{2\\3},\substack{4
\\6}) \big) 
+\frac{3}{10} C(3,4, \substack{5\\6}, \substack{1\\2\\7}) 
\\
&+\frac{11}{90} C(3, \substack{1\\2}, \substack{4\\7}, \substack{5\\6})
+\frac{1}{10} C(4,5,7, \substack{1\\2\\3\\6}) 
+\frac{1}{5} \big(C(2,6, \substack{4\\7}, \substack{1\\3\\5}) + C(3,7, \substack{1\\2}, \substack{4\\5\\6}) \big)
\\
&+\frac{2}{9} \big(C(2,7, \substack{1\\3}, \substack{4\\5\\6}) + C(3,6, \substack{1\\2}, \substack{4\\5\\7}) + C(1, \substack{2\\3},\substack{4\\5}, \substack{6\\7}) + C(2, \substack{1\\3}, \substack{4\\6}, \substack{5\\7}) \big).
\end{array}
 \end{align*}

\noindent For $c_{147} \geq 53/45$:
\begin{align*}
\begin{array}{rc}
C^{(iii)}_{147} =& \frac{31}{450} C(1,2,4, \substack{3\\5\\6\\7}) 
+\frac{4}{135} C(1,3,4, \substack{2\\5\\6\\7}) 
+\frac{2}{9} C(1,5,6, \substack{2\\3\\4\\7}) 
+\frac{221}{1350} C(2,4,7, \substack{1\\3\\5\\6}) 
\\
&+\frac{28}{675} \big(C(4,5,7, \substack{1\\2\\3\\6}) + C(2,6, \substack{4\\5}, \substack{1\\3\\7}) \big) 
+\frac{7}{135} C(4,6,7, \substack{1\\2\\3\\5})
+\frac{281}{1350} C(1,2,\substack{4\\7}, \substack{3\\5\\6}) 
\\
&+\frac{59}{1350} \big(C(2,6, \substack{1\\3}, \substack{4\\5\\7}) + C(3,6, \substack{1\\2}, \substack{4\\5\\7}) \big) 
+\frac{23}{1350} C(1,7, \substack{2\\3}, \substack{4\\5\\6}) 
+\frac{139}{675} C(3,4, \substack{1\\2}, \substack{5\\6\\7}) 
\\
&+\frac{1}{90} \big(C(1,5, \substack{4\\7}, \substack{2\\3\\6}) + C(1,6, \substack{4\\7}, \substack{2\\3\\5}) \big) 
+\frac{26}{675} \big(C(2,5, \substack{1\\3}, \substack{4\\6\\7}) + C(2,7, \substack{3\\4}, \substack{1\\5\\6}) \big) 
\\
&+\frac{47}{1350} C(3,7, \substack{1\\4}, \substack{2\\5\\6}) 
+\frac{7}{150} C(3,7, \substack{4\\6}, \substack{1\\2\\5}) 
+\frac{2}{45} \big(C(4,5, \substack{1\\7}, \substack{2\\3\\6}) + C(4,6, \substack{1\\7}, \substack{2\\3\\5}) \big) 
\\
&+\frac{7}{1350} \big(C(4,7, \substack{2\\5}, \substack{1\\3\\6}) + C(5,7,\substack{3\\4}, \substack{1\\2\\6}) \big) 
+\frac{7}{675} C(5,7, \substack{3\\6}, \substack{1\\2\\4}) 
+\frac{451}{1350} C(1, \substack{2\\3}, \substack{4\\7}, \substack{5\\6}) 
\\
&+\frac{53}{1350} \big(C(5,7, \substack{1\\4}, \substack{2\\3\\6}) + C(6,7, \substack{1\\4}, \substack{2\\3\\5}) \big) 
+\frac{91}{1350} C(3, \substack{1\\2}, \substack{4\\5}, \substack{6\\7})
+\frac{7}{135} C(7, \substack{1\\4}, \substack{2\\6}, \substack{3\\5}) 
\\
&+\frac{7}{270} \big(C(3,4, \substack{1\\7}, \substack{2\\5\\6}) + C(2, \substack{1\\3}, \substack{4\\5}, \substack{6\\7}) + C(3, \substack{1\\2}, \substack{4\\6}, \substack{5\\7}) \big) 
+\frac{7}{450} C(3,4, \substack{5\\6}, \substack{1\\2\\7}) 
\\
&
+\frac{23}{270} C(3,5, \substack{1\\2}, \substack{4\\6\\7}) 
+\frac{82}{675} C(3,7, \substack{5\\6}, \substack{1\\2\\4})
+\frac{1}{1350} C(2,4, \substack{3\\7}, \substack{1\\5\\6}) 
+\frac{49}{675} C(2, \substack{1\\3}, \substack{4\\6}, \substack{5\\7}).
\end{array}
 \end{align*}
\noindent For $c_{457} \geq 2$:
\begin{align*}
\begin{array}{rc}
C^{(iii)}_{457} = & \frac{1}{10} \big( C(1, \substack{2\\3}, \substack{4\\6}, \substack{5\\7}) + C(1, \substack{2\\3}, \substack{4\\7}, \substack{5\\6}) + C(2, \substack{1\\3}, \substack{4\\6}, \substack{5\\7})
                          + C(2, \substack{1\\3}, \substack{4\\7}, \substack{5\\6}) + C(3, \substack{1\\2}, \substack{4\\6}, \substack{5\\7}) \big) 
\\
&+ \frac{1}{15} \big(  C(2,4, \substack{5\\7}, \substack{1\\3\\6}) + C(2,5, \substack{1\\3}, \substack{4\\6\\7})+ C(3,4, \substack{1\\2}, \substack{5\\6\\7}) + C(3,4, \substack{5\\6}, \substack{1\\2\\7}) \big)
+ \frac{1}{30} C(3, \substack{1\\2}, \substack{4\\7}, \substack{5\\6})
\\
&+ \frac{1}{15}\big(C(3,5, \substack{1\\2}, \substack{4\\6\\7}) + C(3, \substack{1\\2}, \substack{4\\5}, \substack{6\\7}) + C(6, \substack{1\\2}, \substack{3\\7}, \substack{4\\5}) \big) 
 + \frac{2}{15} \big( C(1,6,7, \substack{2\\3\\4\\5}) + C(2,6,7, \substack{1\\3\\4\\5}) \big) 
\\
 &+ \frac{4}{15} \big( C(3,4,7, \substack{1\\2\\5\\6}) + C(1,4, \substack{6\\7}, \substack{2\\3\\5}) + C(2, \substack{1\\3}, \substack{4\\5}, \substack{6\\7}) \big)
 + \frac{1}{5} \big( C(2,4,5, \substack{1\\3\\6\\7}) + C(3,6, \substack{1\\2}, \substack{4\\5\\7}) \big)  
\\
   &+\frac{1}{15} \big( C(1,4,6, \substack{2\\3\\7}) + C(2,5,6, \substack{1\\3\\4\\7}) + C(2,5,7, \substack{1\\3\\4\\6})
+ C(1,4, \substack{2\\3}, \substack{5\\6\\7}) \big)
\\
&+ \frac{1}{15}\big(  C(1,5, \substack{4\\6}, \substack{2\\3\\7}) 
 +C(1,7,\substack{4\\5}, \substack{2\\3\\6}) + C(2,4, \substack{1\\3}, \substack{5\\6\\7})  +
 C(2,4, \substack{5\\6}, \substack{1\\3\\7})\big) 
\\
& + \frac{1}{3} \big( C(1,5, \substack{2\\3}, \substack{4\\6\\7}) + C(3,5, \substack{4\\7}, \substack{1\\2\\6}) + C(6,7, \substack{4\\5}, \substack{1\\2\\3})\big).
\end{array}
 \end{align*}
\noindent {\bf Case $(iv)$: $c_{14} \leq 1/6$.} \newline
\noindent For $c_{15} \geq 7/6$:
\begin{align*}
\begin{array}{rc}
C^{(iv)}_{15} = & \frac{1}{15} \big(C(1,5,6, \substack{2\\3\\4\\7}) + C(1,4, \substack{5\\6}, \substack{2\\3\\7}) + C(1,7, \substack{4\\5}, \substack{2\\3\\6}) + C(2,6, \substack{1\\3}, \substack{4\\5\\7}) + C(2,7, \substack{1\\3}, \substack{4\\5\\6})
\\  
& + \frac{1}{45} \big(C(1,4, \substack{2\\5}, \substack{3\\6\\7}) + C(1,4, \substack{3\\5}, \substack{2\\6\\7}) \big) 
+ \frac{1}{15}\big(C(3,6, \substack{1\\2}, \substack{4\\5\\7}) + C(3,7, \substack{1\\2}, \substack{4\\5\\6}) \big) 
 + \frac{4}{45} C(2,3, \substack{4\\5}, \substack{1\\6\\7}) 
\\
&+\frac{1}{6} C(1,2,4, \substack{3\\5\\6\\7}) 
 + \frac{17}{90} \big(C(1,2,5,\substack{3\\4\\6\\7}) + C(1,6,7, \substack{2\\3\\4\\5}) \big)
  + \frac{1}{30} C(1,3,4, \substack{2\\5\\6\\7}) 
   + \frac{6}{15} C(1,5, \substack{2\\3}, \substack{4\\6\\7}) 
\\
& + \frac{1}{90} \big(C(1,4, \substack{2\\3}, \substack{5\\6\\7}) + C(1,6, \substack{5\\7}, \substack{2\\3\\4}) + C(1,7, \substack{4\\6}, \substack{2\\3\\5}) + C(1, \substack{2\\3}, \substack{4\\7}, \substack{5\\6}) \big) 
 + \frac{7}{45} C(1,4,5, \substack{2\\3\\6\\7}) 
\\
& + \frac{2}{45} \big(C(2, \substack{1\\3}, \substack{4\\6}, \substack{5\\7}) + C(2, \substack{1\\3}, \substack{4\\7}, \substack{5\\6}) + C(3, \substack{1\\2}, \substack{4\\6}, \substack{5\\7}) + C(3, \substack{1\\2}, \substack{4\\7}, \substack{5\\6}) \big)
 + \frac{7}{18} C(1,4, \substack{6\\7}, \substack{2\\3\\5})
\\
& + \frac{2}{15} \big(C(3,4, \substack{1\\2}, \substack{5\\6\\7}) + C(3,5, \substack{1\\2}, \substack{4\\6\\7}) \big)
+ \frac{1}{18} C(1,3,5, \substack{2\\4\\6\\7}) 
.
\end{array}
 \end{align*}

\noindent For $c_{145} \geq 257/198$:
\begin{align*}
\begin{array}{rc}
C^{(iv)}_{145}=&  \frac{7}{99} \big( C(4,5,7, \substack{1\\2\\3\\6}) + C(2, \substack{1\\3}, \substack{4\\6}, \substack{5\\7}) + C(2, \substack{1\\3}, \substack{4\\7}, \substack{5\\6}) + C(3, \substack{1\\2}, \substack{4\\6}, \substack{5\\7}) + C(3, \substack{1\\2}, \substack{4\\7}, \substack{5\\6}) \big) 
\\
& + \frac{49}{330} C(1,2,5, \substack{3\\4\\6\\7}) 
 + \frac{23}{330} C(1,3,4, \substack{2\\5\\6\\7}) 
+ \frac{4}{99}\big(C(3,6, \substack{1\\2}, \substack{4\\5\\7}) + C(3,7, \substack{1\\2}, \substack{4\\5\\6}) \big) 
  + \frac{1}{22} C(4,5,6, \substack{1\\2\\3\\7}) 
\\
&   + \frac{5}{198} C(1,5,6, \substack{2\\3\\4\\7})  
+ \frac{4}{45} C(2,3,5, \substack{1\\4\\6\\7}) 
+ \frac{31}{495} C(2,4, \substack{1\\5}, \substack{3\\6\\7})
+ \frac{46}{495} C(3,5, \substack{1\\2}, \substack{4\\6\\7}) 
   + \frac{2}{9} C(1,6, \substack{2\\3}, \substack{4\\5\\7}) 
\\
& + \frac{1}{90} C(2,4, \substack{3\\5}, \substack{1\\6\\7})
  + \frac{53}{495} C(3,4, \substack{1\\2}, \substack{5\\6\\7}) 
   + \frac{7}{990} C(3,4, \substack{1\\5}, \substack{2\\6\\7}) 
    + \frac{103}{990} C(3,4, \substack{2\\5}, \substack{1\\6\\7}) 
\\
& + \frac{1}{18} C(3,5, \substack{1\\4}, \substack{2\\6\\7}) 
  + \frac{38}{495} C(4,6, \substack{1\\5}, \substack{2\\3\\7}) 
   + \frac{17}{330} C(4,7, \substack{1\\5}, \substack{2\\3\\6}) 
    + \frac{119}{495} C(1, \substack{2\\3}, \substack{4\\5}, \substack{6\\7}) 
\\
& + \frac{1}{33} \big( C(5,6, \substack{1\\4}, \substack{2\\3\\7})+C(5,7, \substack{1\\4}, \substack{2\\3\\6}) \big) 
 + \frac{7}{495} \big( C(4, \substack{1\\5}, \substack{2\\6}, \substack{3\\7}) + C(4, \substack{1\\5}, \substack{2\\7}, \substack{3\\6}) \big)
   + \frac{49}{198} C(1,7, \substack{4\\5}, \substack{2\\3\\6}) 
\\
&  + \frac{26}{495} C(2,4,5, \substack{1\\3\\6\\7}) 
+ \frac{4}{99} \big( C(2,5, \substack{3\\4}, \substack{1\\6\\7}) + C(2,6, \substack{1\\3}, \substack{4\\5\\7}) + C(2,7, \substack{1\\3}, \substack{4\\5\\6}) \big)+\frac{4}{33} C(1,2,4, \substack{3\\5\\6\\7}) 
.
\end{array} 
\end{align*}

\noindent For $c_{245} \geq 41/36$:
\begin{align*}
\begin{array}{rc}
C^{(iv)}_{245} = &  \frac{7}{180} \big( C(1,4, \substack{2\\3}, \substack{5\\6\\7}) + C(1,4, \substack{6\\7}, \substack{2\\3\\5}) + C(3,5, \substack{1\\2}, \substack{4\\6\\7}) + C(3, \substack{1\\2}, \substack{4\\6}, \substack{5\\7}) + C(3, \substack{1\\2}, \substack{4\\7}, \substack{5\\6}) \big) 
\\
&+\frac{8}{45} C(1,2,4, \substack{3\\5\\6\\7}) 
 + \frac{1}{9} \big( C(1,4,6, \substack{2\\3\\5\\7})  + C(1, \substack{2\\3}, \substack{4\\6}, \substack{5\\7})  + C(1, \substack{2\\3}, \substack{4\\7}, \substack{5\\6}) \big)
+ \frac{1}{90} C(1,4, \substack{3\\5}, \substack{2\\6\\7}) 
\\
& + \frac{17}{90} C(1,4,7, \substack{2\\3\\5\\6})
  + \frac{1}{6} \big( C(3,4,5, \substack{1\\2\\6\\7})  + C(2,6, \substack{4\\5}, \substack{1\\3\\7}) \big) 
   + \frac{5}{18} C(1,2, \substack{4\\5}, \substack{3\\6\\7}) 
 + \frac{1}{20} C(2, \substack{1\\3}, \substack{4\\5}, \substack{6\\7}) 
\\
& + \frac{7}{90} \big( C(1,4, \substack{3\\7}, \substack{2\\5\\6}) + C(4, \substack{1\\7}, \substack{2\\5}, \substack{3\\6}) \big) 
  + \frac{1}{18} C(2,3, \substack{4\\5}, \substack{1\\6\\7}) 
   + \frac{11}{180} C(2,5, \substack{1\\3}, \substack{4\\6\\7}) 
+ \frac{1}{36} C(2,6, \substack{1\\3}, \substack{4\\5\\7})
\\
& + \frac{1}{180} \big( C(1,4, \substack{5\\6}, \substack{2\\3\\7}) + C(1,4, \substack{5\\7}, \substack{2\\3\\6}) + C(4,7, \substack{2\\5}, \substack{1\\3\\6}) \big) 
 + \frac{11}{60} C(3, \substack{1\\2}, \substack{4\\5}, \substack{6\\7})
 + \frac{2}{45} C(1,3,4, \substack{2\\5\\6\\7}) 
\\
& + \frac{7}{36} C(2,7, \substack{1\\3}, \substack{4\\5\\6})
  + \frac{1}{10} C(3,4, \substack{1\\2}, \substack{5\\6\\7}) 
   + \frac{13}{180} \big( C(3,6, \substack{1\\2}, \substack{4\\5\\7}) + C(3,7, \substack{1\\2}, \substack{4\\5\\6}) \big)
\\
&+ \frac{14}{45} C(1,4,5, \substack{2\\3\\6\\7}) +\frac{1}{12} \big( C(4,6, \substack{2\\5}, \substack{1\\3\\7}) + C(5,6, \substack{2\\4}, \substack{1\\3\\7}) + C(5,7, \substack{2\\4}, \substack{1\\3\\6}) \big)
.
\end{array}
\end{align*}

\bibliographystyle{alpha}
\bibliography{../../../M0nBibliography}

\end{document}